\newtheorem{theorem}{Theorem}
\newtheorem{cor}[theorem]{Corollary}
\newtheorem{thm}{Theorem}
\newtheorem{exam}{Example}
\newtheorem{example}[exam]{Example}
\newtheorem{conj}{Conjecture}
\newtheorem{conjecture}[conj]{Conjecture}
\newtheorem{lem}{Lemma}
\newtheorem{lemma}[lem]{Lemma}
\DeclareMathOperator{\Li}{Li}
\def\\{\cr}
\def\({\left(}
\def\){\right)}
\def\[{\left[}
\def\]{\right]}
\def\<{\langle}
\def\>{\rangle}
\def\fl#1{\left\lfloor#1\right\rfloor}
\def\N{\mathbb{N}}
\def\C{\mathbb{C}}
\def\R{\mathbb{R}}
\def\Z{\mathbb{Z}}
\def\id{\mathrm{id}}
\def\notdivides{\mathrel{\kern-3pt\not\!\kern3.5pt\bigm|}}
\begin{document}

\title{\textbf{Sparse sets that satisfy the Prime Number Theorem}}



\author{
{\sc Olivier Bordell\`{e}s}  \\
2 All\'ee de la combe,  \\
43000 Aiguilhe, France\\
{borde43@wanadoo.fr}
\and
{\sc Randell Heyman}\\
{School of Mathematics and Statistics} \\
{University of New South Wales}\\
{Sydney, NSW 2052, Australia}\\
{randell@unsw.edu.au}
\and
{\sc Dion Nikolic}\\
{University of New South Wales} \\
{Canberra, ACT 2612, Australia}\\
{d.nikolic@ad.unsw.edu.au}
}

\date{}
\maketitle

\begin{abstract}
For arbitrary real $t>1$ we examine the set $\{\fl{x/n^t}:n \le x\}$. Asymptotic formulas for the cardinality of this set and the number of primes in this set are given. The prime counting result uses an alternate Vaughan's decomposition for the von Mangoldt function, with triple exponential sums instead of double exponential sums. These sets are the sparsest known sets that satisfy the prime number theorem, in the sense that the number of primes is asymptotically given by the cardinality of the set divided by the natural logarithm of the cardinality of the set.

\end{abstract}

\noindent
\newline
\newline
\section{Introduction}
Let $x$ and $n$ be strictly positive integers throughout. The prime number theorem states that $\pi(x) \sim x/\log x$, where $\pi(x)$ is the number of primes less than or equal to $x$. We can apply the prime number theorem to sets in the following way:
Let
$$S_{f,x}:=\{f(n):n \le x\},$$
where we restrict $S_{f,x}$ to those sets where $f$ is positive integer valued and not piecewise defined. We will count the number of primes in these sets using
$$\pi\(S_{f,x}\):=\left|\{s \in S_{f,x}: s\text{ is prime}\}\right|.$$
Using this notation the prime number theorem can be restated as
$$\pi\(S_{\id,x}\)\sim \frac{|S_{\id,x}|}{\log |S_{\id,x}|},$$
where  $|*|$ represents set cardinality and $\id$ is the identity function.
We say a set $S_{f,x}$ \textit{satisfies the prime number theorem} if
$$\pi\(S_{f,x}\)\sim \frac{|S_{f,x}|}{\log |S_{f,x}|}.$$

For any real number $m$, denote by $\fl{m}$  the floor of (or more formally the largest integer not exceeding) $m$. The floor function appears prominently in Beatty sequences (see, for example, ~\cite{ABS,BaBa,BaLi,GuNe,Harm}) and Piatestski-Shapiro sequences (see, for example,~\cite{Akb,BBBSW,BBGY,BGS,LSZ,Morg}), both which we will consider in this paper.

Recently there has been interest in the floor function set $S_{\fl{\frac{x}{n}},x}$. That is, the set
$$\left\{\fl{\frac{x}{n}}: n \le x\right\}.$$ This set arose naturally from recent papers estimating sums of the form $\sum_{n \le x} f\(\fl{\frac{x}{n}}\)$ (see, for example, \cite{Bor2,Che, Liu,Ma,Ma2,Stu,Wu,Wu2,Zha,Zha2,Zhao}).

From  \cite{Hey} we have
\begin{align}
\label{eq:cardinality F(x)}
\left|S_{\fl{\frac{x}{n}},x}\right|&=\fl{\sqrt{4x+1}}-1=2\sqrt{x}+O(1).
\end{align}
From \cite{Hey2} we have
$$\pi\(S_{\fl{\frac{x}{n}},x}\)\sim\frac{4 \sqrt{x}}{\log x}.$$
An asymptotic formula with error term is also now known (see \cite{Ma3}) as follows:
\begin{align}
\label{eq:primes in F(x)}
\pi\(S_{\fl{\frac{x}{n}},x}\)&=\frac{4 \sqrt{x}}{\log x}+O\(\sqrt{x}e^{-c(\log x)^{3/5}(\log \log x)^{-1/5}}\),
\end{align}
where $c>0$ is a positive constant.
In \cite{Ma3}, and in correspondence with the authors, Wu and Ma made the astute observation that $S_{\fl{\frac{x}{n}},x}$ is a sparse set that satisfies the prime number theorem. That is,
$$\pi\(S_{\fl{\frac{x}{n}},x}\)\sim \frac{4\sqrt{x}}{\log x}\sim\frac{\left|S_{\fl{\frac{x}{n}},x}\right|}{\log \left|S_{\fl{\frac{x}{n}},x}\right|}.$$
The authors are not aware of literature that focuses on  sets that satisfy the prime number theorem. As prime numbers are an essential component of many cryptography protocols, it is conceivable that sets that satisfy the prime number theorem (particularly those less dense) may have applications.

To explore this further, define the \textit{density} of a set $S_{f,x}$, if it exists, as
$$D_{f,x}:=\frac{|S_{f,x}|}{\max{S_{f,x}}-\min{S_{f,x}}}.$$
We say a function $f$ (or its set $S_{f,x}$) is \textit{sparse} if $D_{f,x} \not \sim 1$. A function $f$ (or its set $S_{f,x}$) is said to be \textit{sparser} than a function $g$ (or its set $S_{g,x}$) if $\lim_{x \rightarrow \infty} D_{f,x}/D_{g,x}=0$.

As mentioned above, the set $S_{id,x}$ satisfies the prime number theorem. But it is not sparse.
In contrast, as implied from above, $D_{\fl{x/n},x}\sim  2 x^{-1/2}$  and is an example of a sparse set that satisfies the prime number theorem. This paper investigates other sparse sets that satisfy the prime number theorem.

Using \cite{Guo} and \cite{RivW}, we briefly review 2 families of sets based on well-known sequences involving the floor function.
For fixed real numbers $\alpha,\beta$ the \textit{Beatty sequence} is the sequence $(\fl{\alpha n+\beta})_{n=1}^\infty$. It is known that there are infinitely many primes in the Beatty sequence if $\alpha \ge 1$. For our purposes we consider $\beta$ to be positive. The Piatetski-Shapiro sequence is the sequence $(\fl{n^c})_{n=1}^\infty$ with $c >1, c \not \in \N$. It is known that the sequence contains infinitely many primes for all $c \in \(1,\frac{243}{205}\)$. Families of sets based on these sequences satisfy or do not satisfy the prime number theorem as follows:
\begin{cor}
\label{thm:beatty}
With the constraints mentioned above, the family of sets
$S_{\fl{n^c},x}$
do not satisfy the prime number theorem.
In contrast, the family of sets $S_{\fl{\alpha n + b},x}$
do satisfy the prime number theorem.
Moreover,
$D_{\fl{\alpha x+\beta},x} \sim \frac1{\alpha}$.
\end{cor}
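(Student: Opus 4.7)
The plan is to combine elementary estimates of cardinality, maximum and minimum of each set with the known prime-counting theorems for the two sequences: the Piatetski--Shapiro theorem from \cite{RivW} and the Beatty prime number theorem from \cite{Guo}.

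First I would check that both sets have cardinality exactly $x$. For $c>1$ and $n\ge 1$, the mean value theorem gives $(n+1)^c-n^c \ge c n^{c-1}\ge c>1$, so $\fl{(n+1)^c}>\fl{n^c}$; similarly, consecutive Beatty terms differ by at least $\alpha\ge 1$, so the values $\fl{\alpha n+\beta}$ are pairwise distinct as well. Hence
$$|S_{\fl{n^c},x}|=|S_{\fl{\alpha n+\beta},x}|=x,$$
and in particular $|S|/\log|S|\sim x/\log x$ in both cases. Because the two maps $n\mapsto \fl{n^c}$ and $n\mapsto \fl{\alpha n+\beta}$ are injective on $\{1,\ldots,x\}$, the usual prime-counting functions for these sequences coincide with $\pi(S_{\cdot,x})$.

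Next I would invoke the two prime-counting asymptotics. The Piatetski--Shapiro theorem gives $\pi(S_{\fl{n^c},x}) \sim x/(c\log x)$ for $1<c<243/205$, whereas the Beatty prime number theorem yields $\pi(S_{\fl{\alpha n+\beta},x}) \sim x/\log x$ in the range considered. Comparing these with $|S|/\log|S|\sim x/\log x$ settles the first two claims at once: the Beatty family matches asymptotically, while the Piatetski--Shapiro family is off by the constant factor $1/c\neq 1$ and therefore fails the prime number theorem. For the density, I would compute
$$\max S_{\fl{\alpha x+\beta},x}-\min S_{\fl{\alpha x+\beta},x}=\fl{\alpha x+\beta}-\fl{\alpha+\beta}=\alpha(x-1)+O(1)\sim \alpha x,$$
so that $D_{\fl{\alpha x+\beta},x}\sim x/(\alpha x)=1/\alpha$.

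There is no genuine obstacle here: the corollary is essentially bookkeeping attached to the two cited prime number theorems. The only point requiring a little care is the injectivity step, which guarantees that the asymptotic prime counts indexed by $n\le x$ translate directly into prime counts for the set, and consequently that the discrepancy between $\pi(S)$ and $|S|/\log|S|$ in the Piatetski--Shapiro case is a true failure of the defining asymptotic rather than an artefact of overcounting.
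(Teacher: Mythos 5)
Your proof is correct and follows essentially the same route as the paper: both establish injectivity of the maps to get $|S|=x$, cite the known prime-counting asymptotics for Piatetski--Shapiro and Beatty sequences, and compare with $x/\log x$, with the density computed the same way. Your explicit mean-value-theorem argument for injectivity is a small refinement over the paper's bare assertion, but the substance is identical.
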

While $S_{\fl{x/n},x}$ and $S_{\fl{\alpha n + b},x}$ both satisfy the prime number theorem, the set  $S_{\fl{x/n},x}$ is sparser than any given set of the form $S_{\fl{\alpha n + b},x}$. Our main result is as follows:

\begin{thm}
\label{thm:x/n^t}
Let $t > 1$, $x \geqslant e$ be real numbers satisfying \eqref{eq:hyp_x}. Then, for all $N \in \Z_{\geqslant 0}$,
\begin{align*}
   \pi \left( S_{\left\lfloor\frac{x}{n^t}\right\rfloor,x}\right) & = \left( \frac{x}{t^t}\right)^{\frac{1}{t+1}} \frac{(t+1)^2}{\log x} \sum_{k=0}^{N} \left( \frac{t \log t}{\log x}\right)^k \\
   & \hspace*{1cm} + \left( \frac{x}{t^t}\right)^{\frac{1}{t+1}}  \frac{t+1}{\log x} \sum_{j=1}^N \sum_{k=0}^{N-j} j! {j+k \choose k} \frac{t^k(t+1)^j (\log t)^k}{(\log x)^{j+k}} \left( 1 + (-t)^j t\right) \\
   & \hspace*{2cm} + O_N \left( \left( \frac{x}{t^t}\right)^{\frac{1}{t+1}} \frac{t^2}{\log x} \left( \frac{t^2 \log t}{\log x} \right)^{N+1} \right).
\end{align*}

\end{thm}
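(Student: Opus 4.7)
The approach exploits the two-scale structure of the set $S_{\lfloor x/n^t\rfloor,x}$: with $y := (tx)^{1/(t+1)}$ and $M := \lfloor(x/t^t)^{1/(t+1)}\rfloor$ (so $y = tM + O(1)$), the map $n\mapsto\lfloor x/n^t\rfloor$ is strictly decreasing with consecutive gaps at least $1$ on $[1,y]$, while its image for $n>y$ is exactly $\{1,2,\dots,M\}$. This gives the basic decomposition
\[
\pi\bigl(S_{\lfloor x/n^t\rfloor,x}\bigr) = \pi(M) + P(y), \qquad P(y) := \sum_{n\le y}\mathbf 1\bigl[\lfloor x/n^t\rfloor \text{ is prime}\bigr],
\]
the two terms corresponding to primes $\le M$ (inherited from the dense tail $n>y$) and primes $>M$ (realized by a unique $n\le y$), respectively. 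The cardinality identity $|S| = (t+1)M + O(1)$ drops out as a by-product.

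I would treat $\pi(M)$ via the de la Vall\'ee-Poussin expansion $\pi(M) = (M/\log M)\sum_{j=0}^{N}j!/(\log M)^j + O_N(M/(\log M)^{N+2})$. Since $\log M = (\log x - t\log t)/(t+1) + O(x^{-1})$, the negative binomial series
\[
\frac{1}{(\log M)^{j+1}} = \frac{(t+1)^{j+1}}{(\log x)^{j+1}}\sum_{k\ge 0}\binom{j+k}{k}\left(\frac{t\log t}{\log x}\right)^{k}
\]
re-expresses $\pi(M)$ as a double series in $j,k$; this provides the ``$1$'' summand in the factor $1+(-t)^jt$ of the theorem, and at $j=0$ already recovers the first displayed main term. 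For $P(y)$, I would invert the order of summation: when $p > M$ the window $\bigl((x/(p+1))^{1/t},(x/p)^{1/t}\bigr]$ has length below $1$ and lies in $[1,y]$, so it contains at most one integer, and
\[
P(y) = \sum_{\substack{p>M\\ p\text{ prime}}}\bigl(\lfloor(x/p)^{1/t}\rfloor-\lfloor(x/(p+1))^{1/t}\rfloor\bigr) = \sum_{p>M}\Delta_p \,-\, R,
\]
with $\Delta_p := (x/p)^{1/t}-(x/(p+1))^{1/t}$ and $R := \sum_{p>M}\bigl(\{(x/p)^{1/t}\}-\{(x/(p+1))^{1/t}\}\bigr)$. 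Abel summation against $\pi(u)$, the Taylor identity $p^{-1/t}-(p+1)^{-1/t} = t^{-1}p^{-1-1/t}+O(p^{-2-1/t})$, and the integration-by-parts recursion $I_k := \int_M^\infty u^{-1-1/t}/(\log u)^k\,du = tM^{-1/t}/(\log M)^k - ktI_{k+1}$ then yield $\sum_{p>M}\Delta_p = (y/\log M)\sum_{j\ge 0}j!(-t)^j/(\log M)^j + \text{error}$, which re-expanded in $(\log x)^{-1}$ as above delivers precisely the ``$(-t)^jt$'' summand of the theorem.

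The main obstacle is controlling the oscillatory remainder $R$. The plan is to replace the sawtooth $\{\cdot\}$ by a Vaaler truncated Fourier expansion, reducing $R$ to linear combinations of exponential sums of the shape $\sum_{p>M}\e\bigl(h(x/p)^{1/t}\bigr)$ over small nonzero integers $h$. To each such sum I would apply the alternate Vaughan-type decomposition of the von Mangoldt function announced in the abstract: in place of the classical Type~I / Type~II double exponential sums, the decomposition yields \emph{triple} exponential sums, whose extra summation variable gives the van der Corput / Vinogradov machinery enough flexibility to force
\[
R \ll_N \left(\frac{x}{t^t}\right)^{1/(t+1)} \frac{t^2}{\log x}\left(\frac{t^2 \log t}{\log x}\right)^{N+1},
\]
exactly matching the claimed remainder. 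This step, the technical heart of the paper, is the principal obstacle. Once $R$ is absorbed, combining the series from $\pi(M)$ and $\sum_p\Delta_p$ term by term via the common coefficient $j!\binom{j+k}{k}(t+1)^j t^k(\log t)^k$ and collecting the two contributions into $1+(-t)^jt$ produces the stated theorem, with the $j=0$ diagonal singled out as the first displayed main term.
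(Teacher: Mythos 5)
Your decomposition $\pi(S) = \pi(M) + P(y)$ with $M \approx (x/t^t)^{1/(t+1)}$, $y=(tx)^{1/(t+1)}$ is exactly the paper's split $S_1 = \pi(xa^{-t})$ plus $S_2+S_3$ (where $a=(tx)^{1/(t+1)}$), your further split of $P(y)$ into the smooth piece $\sum\Delta_p$ and sawtooth remainder $R$ is the paper's $\Sigma_{t,1}+\Sigma_{t,2}$, and your strategy for $R$ (truncated Fourier expansion, alternate Vaughan decomposition, triple exponential sums over primes) is precisely the route of Lemmas~\ref{lem:o(x)} and~\ref{lem:Lambda_exp_bord}. So the proposal takes essentially the same approach as the paper and correctly locates the technical crux, though it defers rather than executes the hard exponential-sum estimate.
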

The error terms in the asymptotic formulas reduce with increasing $N$, as illustrated below.
\begin{example}
\begin{enumerate}
   \item[]
   \item[\scriptsize $\triangleright$] with $N=0$ we derive
$$\pi \left( S_{\left\lfloor\frac{x}{n^t}\right\rfloor,x}\right) = \left( \frac{x}{t^t}\right)^{\frac{1}{t+1}} \frac{(t+1)^2}{\log x}  + O \left( \left( \frac{x}{t^t}\right)^{\frac{1}{t+1}} \frac{t^{4}\log t}{\left( \log x \right)^{2}} \right)$$
provided that $x \geqslant \max \Bigl( (368t)^{770(t+1)}, (\log x)^{308t(t+1)} , t^{2t^2},t^te^{\np{1636}} \Bigr)$.
   \item[\scriptsize $\triangleright$] with $N=1$ this gives
\begin{align*}\pi \left( S_{\left\lfloor\frac{x}{n^t}\right\rfloor,x}\right) &= \left( \frac{x}{t^t}\right)^{\frac{1}{t+1}} \frac{(t+1)^2}{\log x} \left( 1 + \frac{t \log t + 1 -t^2}{\log x}\right)\\
& \qquad + O \left( \left( \frac{x}{t^t}\right)^{\frac{1}{t+1}} \frac{t^{6}(\log t)^2}{\left( \log x \right)^{3}} \right),\end{align*}
provided that $x \geqslant \max \Bigl( (368t)^{\np{1155}(t+1)}, (\log x)^{462t(t+1)}, t^{2t^2} , t^t e^{\np{2454}} \Bigr)$.
\end{enumerate}
\end{example}
When considered with the cardinality of $S_{\fl{x/n},x}$ (See Lemma \ref{lem:card x/n^t}), Theorem \ref{thm:x/n^t} implies that the set $S_{\fl{\frac{x}{n^t}},x}$ satisfies the prime number theorem. Noting that
$\(D_{\fl{\frac{x}{n^t}},x}\)_{t=1}^\infty \rightarrow \frac1{x}$,
we conclude that, for large enough $t$, the set $S_{\fl{\frac{x}{n^t}},x}$ will be sparser than any set based on Beatty sequences.

Recall that throughout the function $f$ in $S_{f,x}$ is never piecewise defined.  We conjecture as follows:
\begin{conjecture}
The family of sets $S_{\fl{\frac{x}{n^t}},x}$ are the sparsest sets that satisfy the prime number theorem.
\end{conjecture}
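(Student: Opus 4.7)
The plan is to prove the contrapositive: if a family $\{S_{f,x}\}_{x \to \infty}$ satisfies the prime number theorem, then there exists some $t > 1$ (possibly depending on $x$) for which $\{S_{f,x}\}$ is not sparser than $\{S_{\fl{x/n^t},x}\}$. Since $D_{\fl{x/n^t},x}$ is of order $x^{-t/(t+1)}$ and $\lim_{t\to\infty} x^{-t/(t+1)} = 1/x$, this amounts to establishing a quantitative lower bound $D_{f,x} \gg x^{-\alpha}$ for some $\alpha < 1$ independent of $x$ along a suitable subsequence.

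The first step is to reduce the PNT hypothesis to size information. Writing $M_x := \max S_{f,x}$ and translating so that $\min S_{f,x} = O(1)$, the trivial bound $\pi(S_{f,x}) \le \pi(M_x) \sim M_x/\log M_x$ combined with PNT applied to $S_{f,x}$ yields $|S_{f,x}|/\log|S_{f,x}| \ll M_x/\log M_x$. The crucial second step would invoke a Brun--Titchmarsh or large-sieve upper bound on the integer set $S_{f,x} \subset [1,M_x]$ to obtain
\[
\pi(S_{f,x}) \ll \frac{|S_{f,x}|}{\log(M_x/|S_{f,x}|)}.
\]
Matching this against the PNT prediction $|S_{f,x}|/\log|S_{f,x}|$ forces $\log M_x = O(\log|S_{f,x}|)$, i.e.\ $M_x \le |S_{f,x}|^C$ for some $C > 1$. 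Combined with the trivial bound $|S_{f,x}| \le x$, this gives $M_x \le x^C$ and hence
\[
D_{f,x} \;\ge\; \frac{|S_{f,x}|}{M_x} \;\ge\; \frac{x^{1-C}}{\log x},
\]
so taking any $t > C-1$ realizes $S_{\fl{x/n^t},x}$ as strictly sparser than $S_{f,x}$, which contradicts the assumption of extremal sparsity.

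The principal obstacle is making the large-sieve step rigorous for an essentially arbitrary admissible $f$. Brun--Titchmarsh requires good distribution of $S_{f,x}$ modulo small moduli, which is not automatic from the informal hypothesis that $f$ be integer-valued and \emph{not piecewise defined}. A rigorous resolution will likely require either making the admissibility class precise (for instance, imposing analyticity, monotone regularity, or a controlled derivative condition, analogous to what is done for Piatetski--Shapiro sequences via exponential sum estimates) or bypassing the sieve entirely by extracting arithmetic structure from the level sets $\{n \le x : f(n) = m\}$. It is also conceivable that artful constructions of $f$ yield counterexamples, in which case the correct statement must further restrict the class of admissible $f$; formalizing this class is itself a nontrivial preliminary step, and for this reason the conjecture must remain open pending future work.
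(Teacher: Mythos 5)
This statement is a \emph{conjecture} in the paper: the authors offer no proof of it, so there is nothing to compare your argument against, and your closing admission that the statement ``must remain open'' is consistent with the paper's own stance. What needs to be said, however, is that the strategy you sketch would not close the gap even in principle as written. The pivotal step is the claimed sieve bound $\pi(S_{f,x}) \ll |S_{f,x}|/\log\bigl(M_x/|S_{f,x}|\bigr)$ for an arbitrary integer set $S_{f,x}\subset[1,M_x]$. No Brun--Titchmarsh or large-sieve statement gives this for arbitrary sets: those bounds count primes in structured sets (intervals, arithmetic progressions, well-distributed sequences), and the inequality is simply false in general --- take $S$ to be a collection of $|S|$ primes lying near $M_x$ with $M_x$ much larger than $|S|$; then $\pi(S)=|S|$ while the right-hand side is smaller by the factor $\log(M_x/|S|)$. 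Since the whole content of the conjecture is precisely to rule out such ``adversarial'' image sets within the informal class ``integer valued and not piecewise defined,'' assuming a sieve bound of this shape begs the question: you have identified the correct obstacle (the admissibility class is not precise enough to support any distributional input), but the proposed route does not overcome it.

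There is also an exponent slip in the final comparison. From $M_x\le|S_{f,x}|^{C}$ and $|S_{f,x}|\le x$ you get only $D_{f,x}\ge x^{1-C}$, whereas every member of the family has $D_{\fl{x/n^t},x}\asymp x^{-t/(t+1)}\ge x^{-1}$ up to constants (and $D_{\fl{x/n^t},x}\to 1/x$ as $t\to\infty$). To show $S_{f,x}$ is not sparser than \emph{some} member of the family you would need $D_{f,x}\gg x^{-t/(t+1)}$ for some fixed $t$, i.e.\ effectively $C<2$; the criterion ``take any $t>C-1$'' is vacuous because $t/(t+1)<1$ for all $t$, so an unspecified $C$ coming from the (already unjustified) sieve step gives no contradiction. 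In short: the conjecture remains open in the paper, and your proposal does not supply the missing idea --- a rigorous definition of the admissible class of $f$ together with a mechanism for converting the prime number theorem hypothesis into a density lower bound of the quality $D_{f,x}\gg x^{-1+\epsilon}$.
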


Throughout $p$ indicates a prime number and $\N=\{1,2,,\ldots\}$. The statements $f(x)=O\(g(x)\)$ and $f(x) \ll g(x)$ denote that there exists a constant $c>0$ such that $|f(x)| \le c|g(x)|$ for large enough $x$. The statement $f(x) \sim g(x)$ is the assertion that $\lim_{x \rightarrow \infty} f(x)/g(x)$ equals 1. Also, $\Lambda(n)$ denotes the von Mangoldt function and $\mu(n)$ denotes the M\"obius function. We use $\psi(x)$ to denote $x-\fl{x}-1/2$. We denote with $e(x)$ the expression $e^{2\pi i x}$. Finally, for all arithmetic functions $f$ and $g$, the Dirichlet convolution $f*g$ of $f$ and $g$ is given by
$$(f*g)(n)=\sum_{d|n} f(d) g(n/d).$$

\section{Organisation of this Paper}

The proof of the main theorem (Theorem \ref{thm:x/n^t}) begins in Subsection \ref{SS:Preliminaries} where we give the split $$\pi\(S_{\fl{\frac{x}{n^t}},x}\)=S_1(x,t)+S_2(x,t)+S_3(x,t).$$

The sums $S_1(x,t)$ and $S_3(x,t)$ are estimated in Subsection \ref{SS:S1} and Subsection \ref{SS:S3} respectively.

The remaining sum, $S_2(x,t)$, is split into $\sum_{t,1}(x)$ and $\sum_{t,2}(x)$. The sum $\sum_{t,1}(x)$ is estimated in Subsubsection \ref{SSS:S1}. The estimation uses Lemmas \ref{le:bound_log}, \ref{le:expansion_log}, \ref{lem:sum_p_beta_inv_0} and \ref{lem:sum_p_beta_inv}. These four lemmas are found in Subsection \ref{SS:Basic tools}.

The sum $\sum_{t,2}(x)$ is estimated in Lemma \ref{lem:o(x)}. This lemma in turn uses Lemmas \ref{lem:Dirichlet_exponential_principle}, \ref{lem:mu}, \ref{lem:Vaughan_bis}, \ref{lem:sarrob04} (all found in Subsection \ref{SS:tech lem}) and Lemma \ref{lem:Lambda_exp_bord} (found in Subsection \ref{SS:exp sum prime}).

\section{Preparatory Lemmas}
\label{S:Prep Lemmas}
The following is a generalisation of \cite[Thm 2]{Hey}.
\begin{lem}
\label{lem:card x/n^t}
For all $t>1$ we have
$$\left|S_{\fl{\frac{x}{n^t}},x}\right|=x^{\frac1{t+1}}\(t^{-\frac{t}{t+1}}+t^{\frac1{t+1}}\)+O(1).$$
\end{lem}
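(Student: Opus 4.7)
The plan is to pivot on the critical point $y := (tx)^{1/(t+1)}$, where the derivative of $g(u) := x/u^t$ has absolute value exactly $1$. For $n$ below this threshold, $g$ drops by more than $1$ per unit step, so consecutive floor values are strictly decreasing and hence pairwise distinct. For $n$ above the threshold, $g$ drops by less than $1$ per unit step, so $\fl{x/n^t}$ is weakly decreasing and attains every integer in the interval it covers. Since $\fl{x/n^t} = 0$ whenever $n > x^{1/t}$, attention is restricted to $1 \leq n \leq \fl{x^{1/t}}$.

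For the first regime, the Mean Value Theorem supplies, for each integer $n$ with $n+1 \leq y$, some $\xi \in (n, n+1)$ such that
$$\frac{x}{n^t} - \frac{x}{(n+1)^t} = \frac{tx}{\xi^{t+1}} > \frac{tx}{y^{t+1}} = 1,$$
which forces the values $\fl{x/n^t}$ to be pairwise distinct on this range. This contributes $\fl{y} + O(1) = t^{1/(t+1)} x^{1/(t+1)} + O(1)$ elements to the set.

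For the second regime, the same calculation shows that consecutive floor values decrease by at most $1$, so $\fl{x/n^t}$ hits every integer from (essentially) $0$ or $1$ up to $\fl{x/(\fl{y}+1)^t}$. A direct simplification gives $x/y^t = (x/t^t)^{1/(t+1)}$, so this regime contributes $(x/t^t)^{1/(t+1)} + O(1) = t^{-t/(t+1)} x^{1/(t+1)} + O(1)$ further distinct elements. Summing the two contributions yields the claimed asymptotic.

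The main technical care is at the seam $n = \fl{y}$: since $y$ is generally not an integer, a single floor value may be produced by both regimes, and the largest value in the second regime may coincide with the smallest value in the first. One must verify that such boundary coincidences, combined with the gap between $y$ and $\fl{y}$ and the ambiguity at the low end (where the minimal value may be $0$ or $1$), contribute only $O(1)$ to the total count, so that no value from the first regime appears in the second except for a bounded overlap. All of these discrepancies are absorbed into the $O(1)$ term, leaving the two main terms stated in the lemma.
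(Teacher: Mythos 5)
Your proof is correct and follows essentially the same approach as the paper's: split at the critical point $a=(tx)^{1/(t+1)}$ where $f'(a)=-1$, argue that each $n<a$ produces a distinct floor value while for $n>a$ the floor hits every integer from $0$ up to roughly $x/a^t$, and sum the two counts. Your extra remarks about the seam and the Mean Value Theorem make explicit what the paper leaves implicit, but the decomposition and both main terms are identical.
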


\begin{proof}
The elements of $S_{\fl{x/n^t},x}$ can be read off the graph $f(n)=xn^{-t}$. Note that $f$ is strictly decreasing. Let $a$ be such that $f'(a)=-1$. Calculating the derivative and using basic algebra we see that $a=(tx)^{1/(t+1)}$.

First consider the small values of $S_{\fl{x/n^t},x}$.   As the slope of $f(n)$ is greater than -1 for all integers $n>a$, it follows that
$$\fl{x(n)^{-t}}-\fl{x(n+1)^{-t}}<1, \,\, n>a.$$
Therefore $0,1,\ldots, \fl{x/a^t} \in S_{\fl{x/n^t},x}$, which implies that the number of elements of $S_{\fl{x/n^t},x}$ less than or equal to $x/a^t$ is
\begin{align}
\label{eq:cards}
t^{-\frac{t}{t+1}}x^{\frac1{t+1}}+O(1).
\end{align}

We now consider the large elements of $S_{\fl{x/n^t},x}$. That is, elements of $S_t(x)$ greater than or equal to $x/a^t$. Since the slope of $f(n)$ is less than -1 for $n < a$ it follows that each $n\in\{1,2,\ldots, \fl{(tx)^{1/(t+1)}}\}$ generates a unique element of $S_{\fl{x/n^t},x}$.
Therefore the number of elements of $S_{\fl{x/n^t},x}$ greater than or equal to $x/a^t$ is
\begin{align}
\label{eq:cardl}
(tx)^{1/(t+1)}+O(1).
\end{align}

Adding \eqref{eq:cards} and \eqref{eq:cardl}, we have
$$\left|S_{\fl{\frac{x}{n^t}},x}\right|= t^{-\frac{t}{t+1}}x^{\frac1{t+1}}+(tx)^{1/(t+1)}+O(1)=x^{\frac1{t+1}}\(t^{-\frac{t}{t+1}}+t^{\frac1{t+1}}\)+O(1),$$
concluding the proof.
\end{proof}

The following lemma will also be needed. The proof is rather long, so it will be postponed to Section~\ref{se:o(x)}.
\begin{lemma}
    \label{lem:o(x)}
    Let $t \geqslant 1$. For all $x \geqslant 2 t^t$ and $\epsilon >0$, we have
 \begin{align}
 \label{psi sum}
    &\sum_{t^{-\frac{t}{t+1}}x^{\frac1{t+1}}<p\leq x^{\frac{155}{154(t+1)}+\epsilon}}\left(\psi\left(\sqrt[t]{\frac{x}{p+1}}\right)-\psi\left(\sqrt[t]{\frac{x}{p}}\right)\right)\notag\\ &\qquad\qquad\qquad\ll x^{\frac{5890}{5929(t+1)}-\frac{43}{11858t}+\epsilon}
    \approx x^{\frac{0.9934}{t+1}-\frac{0.0036}{t}+\epsilon}.
\end{align}
\end{lemma}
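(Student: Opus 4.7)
The plan is to convert the sum of $\psi$-differences into exponential sums over primes and then attack them with the specialised Vaughan decomposition advertised in the abstract. I will split the range $t^{-t/(t+1)}x^{1/(t+1)} < p \leq x^{155/(154(t+1))+\eps}$ into $O(\log x)$ dyadic pieces $P < p \leq 2P$ and work on a generic such piece, summing the resulting bounds at the end.

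\textbf{Step 1 (Fourier expansion of $\psi$).} On each dyadic piece I would approximate $\psi$ by a truncated Fourier series of length $H$ via the Vaaler/Erd\H{o}s--Tur\'an inequality. Writing $A_p = (x/(p+1))^{1/t}$ and $B_p = (x/p)^{1/t}$, the difference $\psi(A_p)-\psi(B_p)$ turns into a weighted sum over frequencies $1 \leq |h| \leq H$ of $e(hA_p)-e(hB_p)$, together with a remainder that is harmless provided $H$ is chosen as a suitable small power of $P$. The problem is thereby reduced to bounding the exponential sums $\sum_{p \sim P} e\bigl(h(x/p)^{1/t}\bigr)$ (and the $p+1$ analogue) uniformly in $h \leq H$.

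\textbf{Step 2 (prime detection and decomposition).} To handle the restriction to primes, I would attach a $\log p$ weight and introduce the von Mangoldt function, then apply a Vaughan-type identity (Lemma \ref{lem:Vaughan_bis}) with parameters $U,V$ tuned so that, after interchanging summations, the resulting expression splits into sums of Type I (a single smooth variable against $\mu$ or $\log$) and sums of Type III (three smooth variables coupled through the monomial phase). This is the alternate decomposition yielding \emph{triple} exponential sums rather than the classical Type I plus Type II.

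\textbf{Step 3 (exponential sum estimates).} For the Type I sums I would apply van der Corput's $A$-process to the smooth phase $h(x/y)^{1/t}$, whose higher derivatives satisfy clean monomial size estimates and so fit the hypotheses of Lemma \ref{lem:Dirichlet_exponential_principle}. For the Type III sums I would invoke the Robert--Sargos bound for triple exponential sums with monomial phases (Lemma \ref{lem:sarrob04}), combined with the M\"obius and $\Lambda$-exponential-sum estimates (Lemmas \ref{lem:mu} and \ref{lem:Lambda_exp_bord}). The saving over the classical Type II route comes precisely from the fact that the Robert--Sargos exponent beats the Heath-Brown bilinear bound in the relevant parameter regime.

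\textbf{Step 4 (optimisation and main obstacle).} Finally, I would balance $U$, $V$ and the Fourier truncation $H$ against $P$ and $x$ to minimise the combined Type I and Type III contributions; summing the bounds across the dyadic pieces recovers the numerical exponents $155/154$ (the upper cut-off of the prime range for which the triple-sum estimate still beats the trivial bound), $5890/5929$ and $43/11858$ in the statement. I expect Step 3 to be the main obstacle: one must verify the non-degeneracy and monomial-size conditions on the mixed partial derivatives of $h(x/(mnr))^{1/t}$ needed to apply Lemma \ref{lem:sarrob04} uniformly in $h$ and in the sizes of the three smooth variables, and track the dependence on $t$ explicitly throughout so that the exponents in the final bound come out with the right $t$-dependence.
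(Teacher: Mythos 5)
Your plan matches the paper's proof closely: dyadic decomposition of the prime range, Fourier/Vaaler truncation of $\psi$, the alternate Vaughan decomposition (Lemma~\ref{lem:Vaughan_bis}) producing Type~I sums and triple-variable sums, Robert--Sargos (Lemma~\ref{lem:sarrob04}) on the latter and exponent pairs on the former, then optimisation in $U,V,H$ (carried out in the paper via Srinivasan's lemma, with the single-sum bounds packaged as Lemma~\ref{lem:Lambda_exp_bord} before being specialised to the pair $(1/2,1/2)$). Two small corrections worth noting: Lemma~\ref{lem:Dirichlet_exponential_principle} is the Dirichlet hyperbola principle used to \emph{build} Lemma~\ref{lem:Vaughan_bis}, not a van der Corput estimate --- the Type~I pieces are bounded by exponent pairs directly; and you omit the Abel-summation step the paper uses to pass from phases $h\bigl(x/(n+\delta)\bigr)^{1/t}$ to the homogeneous phases $h(x/n)^{1/t}$ before the exponential-sum bounds can be applied, which is where the $\delta=0,1$ bookkeeping enters.
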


\section{Proof of Theorem \ref{thm:x/n^t}}

\subsection{Assumption regarding size of $x$}

\noindent
In this subsection we give the lower bound for $x$ in Theorem \ref{thm:x/n^t}, and outline the calculations behind the maximum calculations.
In what follows,
let $x \geqslant e$ and $t > 1$ be fixed, and we always assume $x$ to be large enough to ensure the inequality
\begin{equation}
   x \geqslant \max \Bigl( (368t)^{385(t+1)(N+2)}, (\log x)^{154t(t+1)(N+2)},t^{2t^2} , t^t e^{818(t+1)(N+2)^2} \Bigr) \label{eq:hyp_x}
\end{equation}
for all $N \in \Z_{\geqslant 0}$.
Set $a := \left( x t\right)^{\frac{1}{t+1}} $ so that $x a^{-t} = a t^{-1} = \left( x t^{-t}\right)^{\frac{1}{t+1}}$. Also let $\varepsilon$ be any real number such that $0 < \varepsilon < \frac{1}{500(t+1)}$. Finally, for all $c > 0$, let $\omega_c(x) := e^{-c (\log x)^{3/5} (\log \log x)^{-1/5}}$\footnote{The standard notation for this function is $\delta_c(x)$ but $\delta$ is already used in this paper.} be the usual number-theoretic remainder function. Notice that:
\begin{enumerate}
   \item[\scriptsize $\triangleright$] $x \geqslant (368t)^{385(t+1)(N+2)}$ along with $0 < \varepsilon < \frac{1}{500(t+1)}$ ensures that the error term in \eqref{eq:psi sum} is absorbed by the error term in \eqref{eq:sigma_1}.
   \item[\scriptsize $\triangleright$] $x \geqslant (\log x)^{154t(t+1)(N+2)}$ ensures that the error term in \eqref{eq:error_term} is absorbed by the error term in \eqref{eq:sigma_1}.
   \item[\scriptsize $\triangleright$] $x \geqslant t^{2t^2}$ ensures that $\frac{t \log t}{\log x} \leqslant \frac{t^2}{\log x} \leqslant \frac{t^2 \log t}{\log x} \leqslant \frac{1}{2}$, used throughout.
   \item[\scriptsize $\triangleright$] $x \geqslant t^te^{818(t+1)(N+2)^2}$ ensures that $\omega_c \left( xa^{-t} \right) \leqslant \left( \log xa^{-t}\right)^{-N-2}$, where it is known \cite{ford02} that we may take $c \approx \np{0.2098}$. See Lemma~\ref{lem:sum_p_beta_inv}.
\end{enumerate}

\subsection{Splitting $\pi\(S_{\fl{\frac{x}{n^t}},x}\)$ into subsums}\label{SS:Preliminaries}

\noindent
The proof estimates
$$\pi \left( S_{\left\lfloor\frac{x}{n^t}\right\rfloor,x}\right) = \sum_{p \leqslant x} \left( \left \lfloor \sqrt[t]{\frac{x}{p}} \right \rfloor - \left \lfloor \sqrt[t]{\frac{x}{p+1}} \right \rfloor \right).$$
This sum is split into three subsums as follows:
\begin{align}
   \pi \left( S_{\left\lfloor\frac{x}{n^t}\right\rfloor,x}\right) &= \left( \sum_{p \leqslant xa^{-t}} + \sum_{xa^{-t} < p \leqslant x^{\frac{155}{154(t+1)}}} + \sum_{p > x^{\frac{155}{154(t+1)}}} \right) \left( \left \lfloor \sqrt[t]{\frac{x}{p}} \right \rfloor - \left \lfloor \sqrt[t]{\frac{x}{p+1}} \right \rfloor \right) \notag \\
   &= S_1(x,t) + S_2(x,t) + S_3(x,t). \label{eq:s}
\end{align}

\noindent
As often in this situation, the first subsum yields the main term plus an acceptable error term, and the third one can be estimated trivially. The difficult part comes from the second sum, for which a new estimate on a sum over primes is needed. This latter sum is classically derived using the very useful Vaughan's identity, for which we provide here a slightly different version. See Lemma~\ref{lem:Vaughan_bis}.
\label{lem:Vaughan_bis}

\subsection{Basic tools}
\label{SS:Basic tools}
\noindent
In this subsection we give the Lemmas used for the estimation of $\sum_{t,1}(x)$, used in turn to estimate $S_2(x,t)$.

We will make frequent use of the following simple bound.

\begin{lemma}
\label{le:bound_log}
Let $j \in \Z_{\geqslant 0}$, $t > 1$ and $x \geqslant e$ such that $x \geqslant t^{2t}$. Then
$$\frac{1}{\left( \log xa^{-t} \right)^j} \leqslant \left( \frac{2(t+1)}{\log x}\right)^j.$$
\end{lemma}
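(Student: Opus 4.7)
The plan is a direct algebraic reduction: unwind the definition of $a$ to get a closed-form expression for $\log(xa^{-t})$, compare it with $\tfrac{\log x}{2(t+1)}$, and observe that the required inequality is equivalent to the hypothesis $x \geqslant t^{2t}$.

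First I would handle the trivial case $j = 0$, where both sides equal $1$. For $j \geqslant 1$, since both $\log(xa^{-t})$ and $\log x$ are positive (using $x \geqslant e$ and $t > 1$, so $a^t > 1$ but $xa^{-t} > 1$ as well, as will be confirmed below), it suffices to prove the $j=1$ bound
\[
   \log(xa^{-t}) \;\geqslant\; \frac{\log x}{2(t+1)},
\]
and then raise both sides of the reciprocal inequality to the $j$-th power.

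Second, I would substitute $a = (xt)^{1/(t+1)}$ to compute
\[
   x a^{-t} \;=\; x \cdot (xt)^{-t/(t+1)} \;=\; \left(\frac{x}{t^t}\right)^{\frac{1}{t+1}},
\]
so that
\[
   \log(xa^{-t}) \;=\; \frac{\log x - t \log t}{t+1}.
\]
The desired inequality then becomes
\[
   \frac{\log x - t \log t}{t+1} \;\geqslant\; \frac{\log x}{2(t+1)},
\]
which after clearing the common factor $(t+1)$ and rearranging is exactly $\log x \geqslant 2 t \log t$, i.e.\ $x \geqslant t^{2t}$. This is precisely the standing hypothesis, completing the proof.

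There is no real obstacle here: the statement is a bookkeeping lemma extracting a clean upper bound for $1/\log(xa^{-t})$ in terms of $1/\log x$ and $t$, and the only thing to verify is that the hypothesis $x \geqslant t^{2t}$ is tight enough to absorb the $-t\log t$ term in the numerator. The factor $2$ in $2(t+1)$ corresponds exactly to the condition $\tfrac{1}{2}\log x \geqslant t\log t$; any weaker assumption on $x$ would force a larger constant in place of $2$.
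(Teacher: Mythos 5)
Your proof is correct and follows essentially the same route as the paper: substitute $a=(xt)^{1/(t+1)}$ to compute $\log(xa^{-t}) = \frac{\log x - t\log t}{t+1} = \frac{\log x}{t+1}\bigl(1-\frac{t\log t}{\log x}\bigr)$, then use $x\geqslant t^{2t}$ to control the correction term. The paper phrases the last step as applying $(1-X)^{-1}\leqslant 2$ for $X\leqslant\frac12$, whereas you state the equivalent direct inequality $\log x - t\log t \geqslant \frac12\log x$; this is a purely cosmetic difference.
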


\begin{proof}
The left-hand side is equal to $\left( \frac{t+1}{\log x}\right)^j \times \left( 1 - \frac{t \log t}{\log x}\right)^{-j}$, and the result follows using the inequality $(1-X)^{-1} \leqslant 2$ whenever $X \leqslant \frac{1}{2}$.
\end{proof}

\noindent A more accurate estimate can be proved if we used the series expansion of $(1-X)^{-K}$.

\begin{lemma}
\label{le:expansion_log}
Let $j \in \Z_{\geqslant 0}$, $t > 1$ and $x \geqslant e$ such that $x \geqslant t^{2t}$. Then, for all $N,j \in \Z_{\geqslant 0}$
\begin{multline*}
  \frac{1}{(\log xa^{-t})^{j+1}} = \left( \frac{t+1}{\log x} \right)^{j+1} \sum_{k=0}^N {j+k \choose k} \left( \frac{t \log t}{\log x} \right)^k \\
  + O_{j,N} \left( \frac{t^{j+N+2}(\log t)^{N+1}}{(\log x)^{j+N+2}}\right).
\end{multline*}
\end{lemma}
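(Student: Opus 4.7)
The approach is to parallel the proof of Lemma~\ref{le:bound_log}, but now retain more terms of the geometric expansion. I would first compute $\log(xa^{-t})$ explicitly from $a = (xt)^{1/(t+1)}$: this gives $xa^{-t} = (xt^{-t})^{1/(t+1)}$, so that $\log(xa^{-t}) = (\log x - t\log t)/(t+1)$. Writing $X := (t\log t)/\log x$, the hypothesis $x \geqslant t^{2t}$ forces $0 \leqslant X \leqslant \tfrac{1}{2}$, and
$$\frac{1}{(\log xa^{-t})^{j+1}} = \left( \frac{t+1}{\log x}\right)^{j+1} (1-X)^{-(j+1)}.$$

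Next I would apply the generalized binomial expansion $(1-X)^{-(j+1)} = \sum_{k=0}^{\infty} \binom{j+k}{k} X^k$, valid since $|X|<1$, and truncate at $k = N$. The truncated part, once multiplied by $((t+1)/\log x)^{j+1}$ and with $X = (t\log t)/\log x$ reinstated, produces the claimed main term; what remains is the tail $R_N(X) = \sum_{k \geqslant N+1} \binom{j+k}{k} X^k$ to control.

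To estimate $R_N(X)$ I would invoke Taylor's theorem with Lagrange remainder applied to $f(X) = (1-X)^{-(j+1)}$, which yields $R_N(X) = \binom{j+N+1}{N+1} (1-\theta)^{-(j+N+2)} X^{N+1}$ for some $\theta \in (0,X)$. Since $\theta \leqslant X \leqslant \tfrac{1}{2}$, we get $R_N(X) \leqslant 2^{j+N+2} \binom{j+N+1}{N+1} X^{N+1}$, a bound of the shape $C_{j,N}\, X^{N+1}$. Substituting $X = (t\log t)/\log x$ and multiplying by $((t+1)/\log x)^{j+1}$ gives an error of order $(t+1)^{j+1}(t\log t)^{N+1}/(\log x)^{j+N+2}$; the trivial estimate $(t+1)^{j+1} \leqslant (2t)^{j+1}$ (valid for $t > 1$) then absorbs the prefactor $2^{j+1}$ into the $O_{j,N}$ constant and delivers the stated form $t^{j+N+2}(\log t)^{N+1}/(\log x)^{j+N+2}$.

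The only step needing any care is the tail bound, but since $X$ is bounded strictly away from $1$ this is routine, so I do not anticipate any serious obstacle; the lemma is essentially just Taylor expansion of $(1-X)^{-(j+1)}$ dressed up in the notation of the paper.
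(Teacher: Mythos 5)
Your proposal is correct and follows essentially the same approach as the paper: rewriting $\log(xa^{-t}) = (\log x - t\log t)/(t+1)$, factoring out $\left(\frac{t+1}{\log x}\right)^{j+1}$, and applying the binomial series $(1-X)^{-(j+1)} = \sum_{k\geqslant 0}\binom{j+k}{k}X^k$ with $X = (t\log t)/\log x \leqslant 1/2$. The paper's proof is terser (it states the series expansion and stops), whereas you supply the missing details by bounding the tail explicitly via the Lagrange form of the remainder and the trivial estimate $(t+1)^{j+1}\leqslant(2t)^{j+1}$, all of which is correct.
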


\begin{proof}
As above, the \textsc{lhs} is written as $\left( \frac{t+1}{\log x}\right)^{j+1} \times \left( 1 - \frac{t \log t}{\log x}\right)^{-j-1}$, and we conclude the proof using the series expansion $(1-X)^{-K} = \sum_{k=0}^\infty {K+k-1 \choose k} X^k$ for $|X|<1$.
\end{proof}

\begin{lemma}
\label{lem:sum_p_beta_inv_0}
Let $\beta > 1$ be fixed. Then, for all $x \geqslant 5$
$$\sum_{p > x} \frac{1}{p^\beta} < \frac{2 \beta}{\beta - 1} \, \frac{1}{x^{\beta-1} \log x}.$$
\end{lemma}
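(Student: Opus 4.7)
The plan is to apply Abel (partial) summation to rewrite the prime sum as an integral involving the prime counting function $\pi(t)$, then invoke a standard Chebyshev-type upper bound on $\pi(t)$ to estimate the integral.

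First, I would set up partial summation. For any $T > x$, treating $1/p^\beta$ as the summand over primes,
\[
   \sum_{x < p \leqslant T} \frac{1}{p^\beta} = \frac{\pi(T)}{T^\beta} - \frac{\pi(x)}{x^\beta} + \beta \int_x^T \frac{\pi(t)}{t^{\beta+1}} \, dt.
\]
Since $\beta > 1$ and $\pi(T) \ll T$, the boundary term $\pi(T)/T^\beta$ tends to $0$ as $T \to \infty$, and so
\[
   \sum_{p > x} \frac{1}{p^\beta} = -\frac{\pi(x)}{x^\beta} + \beta \int_x^\infty \frac{\pi(t)}{t^{\beta+1}} \, dt \leqslant \beta \int_x^\infty \frac{\pi(t)}{t^{\beta+1}} \, dt,
\]
where I have simply dropped the nonpositive first term.

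Next, I would invoke the Chebyshev-type bound $\pi(t) < 2t/\log t$, which is well known to hold for all $t \geqslant 2$ (it follows e.g.\ from Rosser--Schoenfeld, or a direct verification at small values combined with the classical Chebyshev estimate). Since $x \geqslant 5$ gives $\log x > 1 > 0$, we have $\log t \geqslant \log x$ for all $t \geqslant x$, so
\[
   \beta \int_x^\infty \frac{\pi(t)}{t^{\beta+1}} \, dt \;<\; 2\beta \int_x^\infty \frac{dt}{t^\beta \log t} \;\leqslant\; \frac{2\beta}{\log x} \int_x^\infty \frac{dt}{t^\beta}.
\]

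Finally, evaluating the elementary integral $\int_x^\infty t^{-\beta} \, dt = x^{1-\beta}/(\beta-1)$ yields the claimed bound $\frac{2\beta}{\beta-1} \cdot \frac{1}{x^{\beta-1}\log x}$. No step is a real obstacle; the only point to watch is that the Chebyshev upper bound $\pi(t) \leqslant 2t/\log t$ be genuinely valid over the entire integration range $t \geqslant x \geqslant 5$, which is indeed the case.
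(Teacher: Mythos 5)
Your proof is correct. The paper itself gives no argument for this lemma; its ``proof'' is a one-line citation to Lemma~3.2 of Ramar\'e \cite{ram16}. Your self-contained elementary derivation is therefore a genuinely different route: Abel summation to pass from the prime sum to $\beta\int_x^\infty \pi(t)\,t^{-\beta-1}\,dt$, dropping the nonpositive boundary term $-\pi(x)/x^\beta$, applying a Chebyshev-type bound $\pi(t) < 2t/\log t$ (which indeed holds for all $t > 1$; Rosser--Schoenfeld even gives the sharper constant $1.25506$, so the factor $2$ leaves comfortable room), and then the crude but sufficient replacement of $\log t$ by $\log x$ on the range $t \geqslant x$. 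The generous constant in the Chebyshev bound is what makes the stated inequality strict and uniform in $\beta$, and the restriction $x \geqslant 5$ is not even needed for your argument (any $x \geqslant 2$ would do). What citing Ramar\'e buys is brevity; what your version buys is a short, fully elementary and self-contained proof requiring nothing beyond an explicit Chebyshev estimate. Both yield the identical bound.
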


\begin{proof}
Follows at once from \cite[Lemma~3.2]{ram16}.
\end{proof}

\begin{lemma}
\label{lem:sum_p_beta_inv}
Let $1 < \beta \leqslant \beta_0$ be fixed. Then, for all $N \in \Z_{\geqslant 0}$ and all $x > \max \left( e^{\frac{1}{\beta-1}}, e^{818(N+2)^2} \right)$,
\begin{multline*}
  \sum_{p > x} \frac{1}{p^\beta} = \frac{1}{\beta-1} \frac{1}{x^{\beta-1} \log x} \sum_{j=0}^N \frac{(-1)^j j!}{(\beta-1)^j (\log x)^j} \\
  + O_N \left( \frac{\beta_0}{x^{\beta-1}((\beta - 1)\log x)^{N+2}} \right).
\end{multline*}
The error term depends only on $N$.
\end{lemma}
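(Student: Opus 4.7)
The plan is to combine partial summation over primes with the prime number theorem in its strong form $\pi(u)=\Li(u)+O(u\,\omega_c(u))$ from \cite{ford02}, and then extract the asymptotic expansion by iterated integration by parts on a logarithmic integral. First I would apply Abel summation for $\beta>1$ to write
\[
\sum_{p>x}\frac{1}{p^\beta} = -\frac{\pi(x)}{x^\beta} + \beta\int_x^\infty \frac{\pi(u)}{u^{\beta+1}}\,du,
\]
substitute the PNT estimate for $\pi(u)$ into the integrand, and integrate the main integral $\beta\int_x^\infty \Li(u)/u^{\beta+1}\,du$ by parts once (using $\Li'(u)=1/\log u$). This cancels the two $\Li(x)/x^\beta$ contributions against the boundary term and collapses the expression to
\[
\sum_{p>x}\frac{1}{p^\beta} = I_0(x) + \mathcal E(x),\qquad I_0(x) := \int_x^\infty \frac{du}{u^\beta\log u},
\]
where $\mathcal E(x)$ is composed of the PNT error contributions, each of size $O\bigl(\omega_c(x)/((\beta-1)x^{\beta-1})\bigr)$, using monotonicity of $\omega_c$ to pull $\omega_c(x)$ outside the remaining integral.

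The heart of the matter is the asymptotic expansion of $I_0(x)$. Setting $I_j(x) := \int_x^\infty du/(u^\beta(\log u)^{j+1})$ and integrating by parts with $g'(u)=u^{-\beta}$ gives the clean recursion
\[
I_j(x) = \frac{1}{(\beta-1)x^{\beta-1}(\log x)^{j+1}} - \frac{j+1}{\beta-1}\,I_{j+1}(x),
\]
which telescopes after $N$ iterations into exactly the finite sum displayed in the statement, plus the remainder $(-1)^{N+1}\frac{(N+1)!}{(\beta-1)^{N+1}}\,I_{N+1}(x)$. Since $\log u\ge\log x$ on $[x,\infty)$, this tail integral is bounded trivially by $(N+1)!/\bigl((\beta-1)^{N+2}x^{\beta-1}(\log x)^{N+2}\bigr)$, which is already of the desired shape $O_N\bigl(\beta_0/(x^{\beta-1}((\beta-1)\log x)^{N+2})\bigr)$.

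The main (and essentially only) obstacle is absorbing $\mathcal E(x)$ into the stated error term uniformly in $\beta\in(1,\beta_0]$. This is where the hypothesis $x>\exp(818(N+2)^2)$ enters: with Ford's admissible constant $c\approx\np{0.2098}$, one checks that this bound forces $\omega_c(x)\le(\log x)^{-(N+2)}$, while the companion hypothesis $x>e^{1/(\beta-1)}$ gives $(\beta-1)\log x\ge 1$ and hence $((\beta-1)\log x)^{N+2}\ge(\beta-1)^{N+2}$. Combining these, $\omega_c(x)$ is dominated by $(\beta_0-1)^{N+2}/((\beta-1)\log x)^{N+2}$, which matches the required bound up to an $N$-dependent constant and the explicit factor of $\beta_0$. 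The sole technical nuisance is checking that the constant $818$ in the hypothesis is calibrated correctly against the specific numerical value of Ford's constant, which is an explicit but routine verification.
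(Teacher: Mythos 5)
Your proof follows essentially the same route as the paper's: Abel summation to write $\sum_{p>x}p^{-\beta}=-\pi(x)x^{-\beta}+\beta\int_x^\infty\pi(t)t^{-\beta-1}\,dt$, substitution of the strong form of the prime number theorem $\pi(t)=\Li(t)+O(t\,\omega_c(t))$, a single integration by parts to cancel the $\Li(x)/x^\beta$ boundary term and reduce to $\int_x^\infty dt/(t^\beta\log t)$, and then iterated integration by parts to extract the finite expansion with a controlled tail. The paper compresses the last step to the phrase ``by successive integrations by part,'' whereas you have written out the recursion $I_j=\frac{1}{(\beta-1)x^{\beta-1}(\log x)^{j+1}}-\frac{j+1}{\beta-1}I_{j+1}$ and the trivial bound on $I_{N+1}$ explicitly, which is a useful elaboration but not a different argument. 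Your treatment of the error term (pulling $\omega_c$ out by monotonicity, then using $x>e^{818(N+2)^2}$ to force $\omega_c(x)\leqslant(\log x)^{-N-2}$ and $x>e^{1/(\beta-1)}$ to control the $(\beta-1)$ powers) likewise matches what the paper's hypotheses are there to do.
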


\begin{proof}
By partial summation and the prime number theorem, there exits $c >0$ such that
\begin{align*}
   \sum_{p > x} \frac{1}{p^\beta} &= - \frac{\pi(x)}{x^\beta} + \beta \int_x^\infty t^{-\beta-1} \pi(t) \, \textrm{d}t \\
   &= - \frac{\Li(x)}{x^\beta} + O \left( x^{1-\beta} \omega_c(x)\right) + \beta \int_x^\infty t^{-\beta-1} \left\{ \Li(t) + O \left( t \omega_c(t) \right) \right\} \, \textrm{d}t \\
   &= - \frac{\Li(x)}{x^\beta} + \beta \int_x^\infty t^{-\beta-1} \Li(t) \, \textrm{d}t + O \left( \frac{\beta_0 x^{1-\beta} \omega_c(x)}{\beta-1}\right) \\
   &= \int_x^\infty \frac{\textrm{d}t}{t^{\beta}\log t}  + O \left( \frac{\beta_0 x^{1-\beta} \omega_c(x)}{\beta-1}\right) \\
   &= \frac{1}{\beta-1} \frac{1}{x^{\beta-1} \log x} \sum_{j=0}^N \frac{(-1)^j j!}{(\beta-1)^j (\log x)^j} + O_N \left( \frac{\beta_0}{x^{\beta-1}((\beta - 1)\log x)^{N+2}} \right)
\end{align*}
by successive integrations by part.
\end{proof}

\subsection{The sum $S_1(x,t)$}
\label{SS:S1}
\noindent
By the prime number theorem, there exists $c > 0$ such that
$$S_1(x,t) = \pi \left( xa^{-t} \right) = \Li \left( xa^{-t} \right) + O \left( xa^{-t} \omega_c \left( xa^{-t} \right) \right) $$
so that, using the asymptotic expansion of $\Li$, we derive for all $N \in \Z_{\geqslant 0}$,
$$S_1(x,t) = \frac{xa^{-t}}{\log xa^{-t}} \sum_{j=0}^N \frac{j!}{(\log xa^{-t})^j} + O \left( \frac{xa^{-t}}{(\log xa^{-t})^{N+2}}\right).$$
Now using Lemma~\ref{le:bound_log} for the error term and Lemma~\ref{le:expansion_log} for the main term yields
\begin{multline}
   S_1(x,t) = \left( \frac{x}{t^t}\right)^{\frac{1}{t+1}} \frac{t+1}{\log x} \sum_{\substack{j,k=0 \\ j+k \leqslant N}}^N j! {j+k \choose k} \frac{t^{k}(t+1)^j (\log t)^k}{(\log x)^{j+k}} \\
   + O_N \left( \left( \frac{x}{t^t}\right)^{\frac{1}{t+1}} \frac{t}{\log x} \left( \frac{t \log t}{\log x} \right)^{N+1} \right) \label{eq:s_1}
\end{multline}
provided that $x \geqslant t^{2t}$.

\subsection{The sum $S_2(x,t)$}

\noindent
The sum $S_2(x,t)$ is split into 2 subsums. The subsum $\Sigma_{t,1}(x)$ is estimated in subsubsection \ref{SSS:S1}. The estimation of subsum $\Sigma_{t,2}(x)$ is the result of Lemma \ref{lem:o(x)} and is proven in Section \ref{se:o(x)}.

We first write
\begin{align*}
   S_2(x,t) &= \sum_{xa^{-t} < p \leqslant x^{\frac{155}{154(t+1)}}} \left( \sqrt[t]{\frac{x}{p}} - \sqrt[t]{\frac{x}{p+1}} \right) \\
   & \hspace*{2cm} + \sum_{xa^{-t} < p \leqslant x^{\frac{155}{154(t+1)}}} \left( \psi \left(  \sqrt[t]{\frac{x}{p+1}} \right) - \psi \left( \sqrt[t]{\frac{x}{p}} \right) \right) \\
   &:= \Sigma_{t,1}(x) + \Sigma_{t,2}(x).
\end{align*}

\subsubsection{The sum $\Sigma_{t,1}(x)$}
\label{SSS:S1}
\noindent
We have
\begin{align*}
   \Sigma_{t,1}(x) &= \frac{x^{1/t}}{t} \sum_{xa^{-t} < p \leqslant x^{\frac{155}{154(t+1)}}} \frac{1}{p^{1+1/t}} + O \left( \frac{x^{1/t}}{t} \sum_{xa^{-t} < p \leqslant x^{\frac{1}{t+1}+\epsilon}} \frac{1}{p^{2+1/t}}\right) \\
   &= \frac{x^{1/t}}{t} \sum_{p > xa^{-t}} \frac{1}{p^{1+1/t}} + O \left( \frac{x^{1/t}}{t} \sum_{p > x^{\frac{155}{154(t+1)}}} \frac{1}{p^{1+1/t}} + \frac{x^{1/t}}{t} \sum_{p > xa^{-t}} \frac{1}{p^{2+1/t}}\right).
\end{align*}
For $x \geqslant \max \left( t^{2t}, 5(5t)^t \right)$, using Lemma~\ref{lem:sum_p_beta_inv_0} with $\beta = 1+\frac{1}{t}$ in the $1$st sum of the error term and $\beta = 2 + \frac{1}{t}$ in the $2$nd sum of the error term, and also using Lemma~\ref{le:bound_log}, the error term does not exceed
\begin{align}
   & < \frac{8t x^{\frac{1}{t+1}-\frac{1}{154t(t+1)}}}{\log x} + \frac{4}{t^{t} \log \left( xa^{-t} \right)} \notag \\
   & < \frac{8t}{\log x} \left( x^{\frac{1}{t+1}-\frac{1}{154t(t+1)}} + 2 t^{-t} \right) \notag \\
   & \leqslant \frac{24t x^{\frac{1}{t+1}-\frac{1}{154t(t+1)}}}{\log x}  < 24 x^{\frac{1}{t+1}-\frac{1}{154t(t+1)}}. \label{eq:error_term}
\end{align}
For $x \geqslant \max \left( t^{2t^2}, t^t e^{818(t+1)(N+2)^2} \right) $, using Lemma~\ref{lem:sum_p_beta_inv} with $\beta_0=2$ and $\beta = 1+\frac{1}{t}$ , the main term is
\begin{align}
   &= \frac{\left( x t\right)^{\frac{1}{t+1}}}{\log \left( x a^{-t}\right) } \sum_{j=0}^N \frac{(-t)^j j!}{(\log xa^{-t})^j} + O \left( \frac{\left( x t\right)^{\frac{1}{t+1}}t^{N+1}}{\left( \log x a^{-t} \right)^{N+2}} \right) \notag \\
   &= \left( \frac{x}{t^t}\right)^{\frac{1}{t+1}} \frac{t}{\log \left( x a^{-t}\right) } \sum_{j=0}^N \frac{(-t)^j j!}{(\log xa^{-t})^j} + O_N \left( \left( \frac{x}{t^t}\right)^{\frac{1}{t+1}} \frac{t^{N+2}}{\left( \log xa^{-t} \right)^{N+2}} \right) \notag \\
   &= \left( \frac{x}{t^t}\right)^{\frac{1}{t+1}} \frac{t(t+1)}{\log x} \sum_{\substack{j,k=0 \\ j+k \leqslant N}}^N (-1)^j j! {j+k \choose k} \frac{t^{j+k}(t+1)^j (\log t)^k}{(\log x)^{j+k}} \notag \\
   & \hspace*{2cm} + O_N \left( \left( \frac{x}{t^t}\right)^{\frac{1}{t+1}} \frac{t^2}{\log x} \left( \frac{t^2 \log t}{\log x} \right)^{N+1} \right) \label{eq:main_term}
\end{align}
where we used Lemma~\ref{le:expansion_log} and Lemma~\ref{le:bound_log} in the last line. Note that the condition \eqref{eq:hyp_x} implies that the error term in \eqref{eq:error_term} is absorbed by the error term in \eqref{eq:main_term}. Hence, for all $N \in \Z_{\geqslant 0}$ and all $x$ satisfying \eqref{eq:hyp_x}, we obtain
\begin{multline}
   \Sigma_{t,1}(x) = \left( \frac{x}{t^t}\right)^{\frac{1}{t+1}} \frac{t(t+1)}{\log x} \sum_{\substack{j,k=0 \\ j+k \leqslant N}}^N (-1)^j j! {j+k \choose k} \frac{t^{j+k}(t+1)^j (\log t)^k}{(\log x)^{j+k}} \\
   + O_N \left( \left( \frac{x}{t^t}\right)^{\frac{1}{t+1}} \frac{t^2}{\log x} \left( \frac{t^2 \log t}{\log x} \right)^{N+1} \right). \label{eq:sigma_1}
\end{multline}

\subsubsection{The sum $\Sigma_{t,2}(x)$}

\noindent
Using Lemma~\ref{lem:o(x)} we derive at once

\begin{equation}
    \Sigma_{t,2}(x) \ll x^{\frac{\np{5890}}{\np{5929}(t+1)} - \frac{43}{\np{11858}t}  + \varepsilon} \approx x^{\frac{\np{0.9934}}{t+1} - \frac{\np{0.0036}}{t} + \varepsilon}\label{eq:psi sum}
\end{equation}
for all $x \geqslant 2t^{t}$ and all $\varepsilon >0$ small. Note that if $x$ satisfies \eqref{eq:hyp_x} and $0 < \varepsilon < \frac{1}{500(t+1)}$, then the bound \eqref{eq:psi sum} is absorbed by the error term in \eqref{eq:sigma_1} above.

\subsubsection{The sum $S_2(x,t)$}

\noindent
By \eqref{eq:sigma_1} and \eqref{eq:psi sum} we immediately derive the next estimate. Assuming $x,t > 1$ satisfying \eqref{eq:hyp_x}, we have
\begin{multline}
   S_2(x,t) = \left( \frac{x}{t^t}\right)^{\frac{1}{t+1}} \frac{t(t+1)}{\log x} \sum_{\substack{j,k=0 \\ j+k \leqslant N}}^N (-1)^j j! {j+k \choose k} \frac{t^{j+k}(t+1)^j (\log t)^k}{(\log x)^{j+k}} \\
   + O \left( \left( \frac{x}{t^t}\right)^{\frac{1}{t+1}} \frac{t^2}{\log x} \left( \frac{t^2 \log t}{\log x} \right)^{N+1} \right). \label{eq:s_2}
\end{multline}

\subsection{The sum $S_3(x,t)$}
\label{SS:S3}

\noindent
If $p\in S_{\fl{x/n^t},x}$ satisfies $p>x^{\frac{155}{154(t+1)}}$, then we must have $n<x^{\frac{1}{t+1}-\frac{1}{154t(t+1)}}$ so that
\begin{equation}
   S_3(x,t) \ll x^{\frac{1}{t+1}-\frac{1}{154t(t+1)}}, \label{eq:s_3}
\end{equation}
which is absorbed in the error term of \eqref{eq:s_2} if $x,t$ satisfy \eqref{eq:hyp_x}.

\subsection{Conclusion}

\noindent
Substituting \eqref{eq:s_1}, \eqref{eq:s_2} and \eqref{eq:s_3} into \eqref{eq:s} yields Theorem~\ref{thm:x/n^t}.
\qed

\section{Proof of Lemma~\ref{lem:o(x)}}
\label{se:o(x)}

This section estimates the sum $\sum_{t,2}(x)$. In subsection \ref{SS:tech lem} we combine the Dirichlet hyperbola principle with Vaughan's identity for the Möbius function to equate a  von Mangoldt function twisted exponential sum  with 4 triple exponential sums and a double exponential sum. These five sums are estimated in Subsection \ref{SS:exp sum prime}. Finally, in subsection \ref{SS:proof lem 3}, we use the previous lemma with specified exponential to estimate $\sum_{t,2}(x)$.

\subsection{Technical lemmas}
\label{SS:tech lem}

The first tool is a direct application of the celebrated Dirichlet hyperbola principle. See \cite[Corollary~3.4]{Bor2}.

\begin{lemma}
\label{lem:Dirichlet_exponential_principle}
Let $f,g: \Z_{\geqslant 1} \to \C$ be two arithmetic functions and $F : \left[ 1,\infty \right) \to \left[ 0,\infty \right)$ be any map. For all $R<R_1 \in \Z_{\geqslant 1}$ and $1 \leqslant U \leqslant R$
\begin{multline*}
   \sum_{R < n \leqslant R_1} \left( f \star g \right)(n)  \, e \left( F(n) \right) = \sum_{n \leqslant \frac{UR_1}{R}} f(n) \sum_{\frac{R}{n} < m \leqslant \frac{R_1}{n}} g(m) \, e \left( F(mn) \right) \\
   + \sum_{n \leqslant \frac{R}{U}} g(n) \sum_{\frac{R}{n} < m \leqslant \frac{R_1}{n}} f(m)  \, e \left( F(mn) \right) - \sum_{U < n \leqslant \frac{UR_1}{R}} f(n) \sum_{\frac{R}{n} < m \leqslant \frac{R}{U}} g(m)  \, e \left( F(mn) \right).
\end{multline*}
\end{lemma}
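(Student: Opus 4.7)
The plan is to prove this as a weighted, range-restricted version of the classical Dirichlet hyperbola identity, via inclusion--exclusion on lattice points. After expanding the convolution, the left-hand side becomes
\[
\sum_{R < ab \leqslant R_1} f(a) g(b) \, e(F(ab)),
\]
so the task reduces to a signed decomposition of the set $\cS := \{(a,b) \in \Z_{\geqslant 1}^2 : R < ab \leqslant R_1\}$ that matches the three sums on the right.

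Geometrically, $\cS$ is the annular region between the hyperbolas $ab = R$ and $ab = R_1$, and the hyperbola trick at parameter $U$ cuts this region with the two lines $a = UR_1/R$ and $b = R/U$, which meet exactly on the outer hyperbola $ab = R_1$. I would therefore introduce
\[
\cS_1 := \{(a,b) \in \cS : a \leqslant UR_1/R\}, \qquad \cS_2 := \{(a,b) \in \cS : b \leqslant R/U\},
\]
and match the three sums of the lemma with weighted counts over $\cS_1$, $\cS_2$, and $\cS_1 \cap \cS_2$. In the first and third sums of the statement the outer index $n$ plays the role of $a$ (paired with $f$), while in the second sum it plays the role of $b$ (paired with $g$); with this translation each displayed double sum becomes exactly $\sum_{(a,b)} f(a)g(b)e(F(ab))$ restricted to the corresponding set.

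Two combinatorial verifications then finish the argument. First, $\cS = \cS_1 \cup \cS_2$: any $(a,b) \in \cS$ lying in neither slab would satisfy $a > UR_1/R$ and $b > R/U$, forcing $ab > R_1$, contradicting $(a,b) \in \cS$. Second, $\cS_1 \cap \cS_2 = \{(a,b) : U < a \leqslant UR_1/R,\ R/a < b \leqslant R/U\}$: the two upper bounds on $a,b$ come straight from the definition, the inequality $b > R/a$ is merely $ab > R$, while $a > U$ follows from $ab > R$ combined with $b \leqslant R/U$, and $ab \leqslant R_1$ is automatic since $a \leqslant UR_1/R$ and $b \leqslant R/U$ multiply to $R_1$. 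The indicator identity $\mathbf{1}_{\cS} = \mathbf{1}_{\cS_1} + \mathbf{1}_{\cS_2} - \mathbf{1}_{\cS_1 \cap \cS_2}$ follows, and multiplying by $f(a)g(b)e(F(ab))$ and summing yields the claimed identity.

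No hypotheses on $f,g$ or on $F$ are used beyond their domains of definition, so the proof is entirely formal; the only mild obstacle is the bookkeeping for which of $f,g$ sits on the outer variable in each of the three sums, which is precisely what makes the displayed formula look asymmetric.
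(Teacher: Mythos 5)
Your argument is correct and is precisely the standard proof of the Dirichlet hyperbola identity in this range-restricted, weighted form. The paper does not supply its own proof of this lemma; it simply cites Corollary~3.4 of Bordell\`es' paper \emph{On certain sums of number theory}, so there is no in-text argument to compare against, but the inclusion--exclusion on the lattice-point region $\{(a,b): R < ab \leqslant R_1\}$ via the two slabs $\{a \leqslant UR_1/R\}$ and $\{b \leqslant R/U\}$ is exactly the expected route. All the key verifications are present: that the two slabs cover the region (the complement would force $ab > R_1$), and that the intersection is $\{U < a \leqslant UR_1/R,\ R/a < b \leqslant R/U\}$, with the nontrivial inequalities $a > U$ and $ab \leqslant R_1$ deduced correctly from $ab > R$, $b \leqslant R/U$ and from $a \cdot b \leqslant (UR_1/R)(R/U) = R_1$ respectively. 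The translation between the outer variable $n$ and the roles of $a,b$ in the three displayed sums is also handled correctly.
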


Our second tool is an application of the no less famous Vaughan's identity for the M\"{o}bius function. See \cite[(11)]{monva81}.

\begin{lemma}
\label{lem:mu}
Let $f: \Z_{\geqslant 1} \to \C$ be an arithmetic function and $1 \leqslant R < R_1 \leqslant 2R$ be positive integers. For all $1 \leqslant V \leqslant R^{1/2}$
\begin{multline*}
 \sum_{R < n \leqslant R_1} \mu(n) f(n) = -\sum_{n \leqslant V^2} a_n \sum_{\frac{R}{n} < m \leqslant \frac{R_1}{n}} f(mn) \\
 - \sum_{V < n \leqslant \frac{R_1}{V}} b_n \sum_{\max \left( V, \frac{R}{n} \right) < m \leqslant \frac{R_1}{n}} \mu(m) f(mn),
\end{multline*}
where $a_n \ll n^\epsilon$ and $b_n \ll n^\epsilon$.
\end{lemma}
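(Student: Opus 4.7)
The plan is to establish a Vaughan-type pointwise decomposition of $\mu(n)$ valid on the range $n>V^{2}$, and then multiply by $f(n)$, sum over $R<n\leqslant R_{1}$, and swap the order of summation. Since the hypothesis $V\leqslant R^{1/2}$ gives $V^{2}\leqslant R<n$ throughout the outer sum, the pointwise identity will apply to every $n$ that appears.

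I would first introduce the auxiliary function $g:=\delta-\mu_{\leqslant V}\star 1$, where $\mu_{\leqslant V}(n):=\mu(n)\mathbf{1}_{n\leqslant V}$ and $\delta$ is the Dirichlet delta. A direct check shows that $g$ is supported on $n>V$ and that $|g(n)|\leqslant\tau(n)\ll n^{\epsilon}$: indeed $g(1)=1-\mu(1)=0$, and for $1<n\leqslant V$ every divisor of $n$ is $\leqslant V$, so $g(n)=-\sum_{d\mid n}\mu(d)=0$. Writing $\mu=\mu_{\leqslant V}+\mu_{>V}$, the identity $\mu\star 1=\delta$ yields $\mu_{>V}\star 1=g$, whence $\mu_{>V}=\mu_{>V}\star 1\star\mu=g\star\mu$. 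Expanding $g\star\mu=g\star\mu_{\leqslant V}+g\star\mu_{>V}$ and using $\mu_{\leqslant V}(n)=0$ for $n>V$, I obtain, for every $n>V$, that $\mu(n)=(g\star\mu_{\leqslant V})(n)+(g\star\mu_{>V})(n)$.

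Next I would collapse each piece. For the first, $(g\star\mu_{\leqslant V})(n)=\sum_{c\mid n,\,c\leqslant V}\mu(c)g(n/c)$; since $n>V^{2}$ and $c\leqslant V$ force $n/c>V$, we have $g(n/c)=-(\mu_{\leqslant V}\star 1)(n/c)$, and regrouping gives $-(\mu_{\leqslant V}\star\mu_{\leqslant V}\star 1)(n)=-\sum_{d\mid n}(\mu_{\leqslant V}\star\mu_{\leqslant V})(d)$, automatically supported on $d\leqslant V^{2}$. Setting $a_{d}:=(\mu_{\leqslant V}\star\mu_{\leqslant V})(d)$ gives $|a_{d}|\leqslant\tau(d)\ll d^{\epsilon}$. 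The second piece is $\sum_{dm=n,\,d>V,\,m>V}g(d)\mu(m)$; setting $b_{d}:=-g(d)$ gives $|b_{d}|\leqslant\tau(d)\ll d^{\epsilon}$. Combining, for $n>V^{2}$,
\[ \mu(n)=-\sum_{\substack{d\mid n\\ d\leqslant V^{2}}}a_{d}-\sum_{\substack{dm=n\\ d>V,\,m>V}}b_{d}\mu(m). \]
Multiplying by $f(n)$, summing over $R<n\leqslant R_{1}$, and swapping then gives the claim: for the Type~I term, holding $d\leqslant V^{2}$ and letting $m=n/d$ run over $R/d<m\leqslant R_{1}/d$ is direct; for the Type~II term, writing $n=dm$ with $d>V$, $m>V$, and $R<dm\leqslant R_{1}$ gives $\max(V,R/d)<m\leqslant R_{1}/d$, while $dm\leqslant R_{1}$ together with $m>V$ forces $d\leqslant R_{1}/V$.

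The only delicate step is the derivation of the pointwise identity: verifying that $g$ vanishes on $[1,V]$, that $\mu_{>V}=g\star\mu$, and that $(g\star\mu_{\leqslant V})(n)$ collapses to a divisor sum supported on $d\leqslant V^{2}$ once $n>V^{2}$. After that, the interchange of summations and the bound $|a_{d}|,|b_{d}|\leqslant\tau(d)\ll d^{\epsilon}$ are routine.
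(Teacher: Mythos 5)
Your proposal is correct. Note that the paper contains no internal proof of this lemma to compare against: it is quoted directly from Montgomery and Vaughan \cite{monva81} (their identity (11)), so what you have done is supply the missing derivation. Your argument is essentially the standard way Vaughan's identity for the M\"obius function is obtained: setting $g:=\delta-\mu_{\leqslant V}\star 1$ (the coefficients of $1-F(s)\zeta(s)$ with $F(s)=\sum_{n\leqslant V}\mu(n)n^{-s}$), checking that $g$ vanishes on $[1,V]$, writing $\mu_{>V}=g\star\mu=g\star\mu_{\leqslant V}+g\star\mu_{>V}$, and observing that for $n>V^{2}$ the first piece collapses to $-(\mu_{\leqslant V}\star\mu_{\leqslant V}\star 1)(n)$, which is a divisor sum supported on $d\leqslant V^{2}$. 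All the steps check out: the hypothesis $V\leqslant R^{1/2}$ guarantees $n>V^{2}$ throughout the outer range, your explicit coefficients $a_d=(\mu_{\leqslant V}\star\mu_{\leqslant V})(d)$ and $b_d=-g(d)$ satisfy $|a_d|,|b_d|\leqslant\tau(d)\ll d^{\epsilon}$, and the interchange of summation produces exactly the stated ranges ($d\leqslant V^{2}$ with $R/d<m\leqslant R_1/d$ in the type I term; $V<d\leqslant R_1/V$ with $\max(V,R/d)<m\leqslant R_1/d$ in the type II term, the constraint $d\leqslant R_1/V$ coming from $dm\leqslant R_1$ and $m>V$). So the proposal is a complete, self-contained proof of a statement the paper only cites, and it yields the lemma with explicit coefficients rather than unspecified ones.
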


Combining these two lemmas yield the next identity which may be seen as an alternate Vaughan's decomposition for the von Mangoldt function, with triple exponential sums instead of double exponential sums.

\begin{lemma}
\label{lem:Vaughan_bis}
Let $F : \R_+ \to \R$ be any map and $1 \leqslant R < R_1 \leqslant 2R$ be positive integers. For all $1 \leqslant U \leqslant R$ and $1 \leqslant V \leqslant U^{1/2}$ we have
\begin{align*}
   \sum_{R < n \leqslant R_1} \Lambda(n) \, e \left( F(n)\right) & = \sum_{n \leqslant 2U} \mu(n) \sum_{\frac{R}{n} < m \leqslant \frac{R_1}{n}} e \left( F(mn)\right) \log m \\
   & \hspace*{0.5cm}- \sum_{n \leqslant \frac{R}{U}} \log n \sum_{m \leqslant V} a_m \sum_{\frac{R}{mn} < h \leqslant \frac{R_1}{mn}} e \left( F(mnh)\right) \log h \\
   & \hspace*{1cm} - \sum_{n \leqslant \frac{R}{U}} \log n \sum_{V < m \leqslant V^2} a_m \sum_{\frac{R}{mn} < h \leqslant \frac{R_1}{mn}} e \left( F(mnh)\right) \log h \\
   & \hspace*{1.5cm} + \sum_{n \leqslant \frac{R}{U}} \log n \sum_{V < m \leqslant \frac{R_1}{Vn}} b_m \sum_{\max \left( \frac{R}{mn},V \right)  < h \leqslant \frac{R_1}{mn}} \mu(h) \, e \left( F(mnh)\right) \\
   & \hspace*{2cm} - \sum_{U < n \leqslant 2U} \mu(n) \sum_{\frac{R}{n} < m \leqslant \frac{R}{U}} e \left( F(mn)\right) \log m,
\end{align*}
with $a_m \ll m^\epsilon$ and $b_m \ll m^\epsilon$.
\end{lemma}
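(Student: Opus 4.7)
The plan is to derive the stated identity via two nested applications of the preparatory tools — first the Dirichlet hyperbola principle of Lemma~\ref{lem:Dirichlet_exponential_principle}, and then Vaughan's M\"obius identity of Lemma~\ref{lem:mu} — starting from the classical convolution identity $\Lambda = \mu \star \log$. This converts the twisted von Mangoldt sum into $\sum_{R < n \leqslant R_1} (\mu \star \log)(n)\, e(F(n))$, a form to which Lemma~\ref{lem:Dirichlet_exponential_principle} applies directly.

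First, I would invoke Lemma~\ref{lem:Dirichlet_exponential_principle} with $f = \mu$, $g = \log$, and splitting parameter $U$. Since $R_1 \leqslant 2R$, the upper threshold $UR_1/R$ satisfies $UR_1/R \leqslant 2U$, and the three output sums are recognised as follows: the first directly yields Term~1 of the present lemma (the sum over $n \leqslant 2U$ with $\mu(n)$ outside and $\log m$ inside), and the third yields Term~5 (the correction supported on $U < n \leqslant 2U$). The surviving middle piece is
\[
\sum_{n \leqslant R/U} \log(n) \sum_{R/n < m \leqslant R_1/n} \mu(m)\, e(F(mn)),
\]
a M\"obius-twisted double exponential sum that must be further decomposed in order to yield Terms~2, 3, and~4.

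Second, to this middle piece I would apply Lemma~\ref{lem:mu} to the inner M\"obius sum for each fixed outer $n$, with $f(m) = e(F(mn))$ and parameter $V$. The hypothesis $V \leqslant U^{1/2}$ combined with $n \leqslant R/U$ guarantees $V^2 \leqslant U \leqslant R/n$, so $V$ is admissible on the inner range $(R/n, R_1/n]$, and the constraint $R_1/n \leqslant 2(R/n)$ demanded by Lemma~\ref{lem:mu} is preserved. This produces two triple sums: an \emph{$a_m$-piece} with $m \leqslant V^2$ and a bare inner $h$-sum over $(R/(mn), R_1/(mn)]$, and a \emph{$b_m$-piece} with $V < m \leqslant R_1/(Vn)$, inner $\mu(h)$-twist and range $\max(V, R/(mn)) < h \leqslant R_1/(mn)$. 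The latter is exactly Term~4, while splitting the $a_m$-range at $m = V$ produces the shape of Terms~2 and~3. The bounds $a_m, b_m \ll m^\epsilon$ transfer directly from Lemma~\ref{lem:mu}.

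The main obstacle is the bookkeeping. One needs to verify that the nested range conditions hold uniformly (in particular $V^2 \leqslant R/n$ and $R_1/(mn) \leqslant 2R/(mn)$), to keep track of signs generated by both identities, and — most subtly — to account for the propagation of the $\log$-factor sitting on the inner variable of the original $\mu\star\log$ convolution after the nested decomposition: this is what produces the $\log h$ appearing in Terms~2 and~3 but absent from Term~4. None of the individual steps is deep, but correctly assembling the five pieces with their signs, coefficients, and summation ranges requires patience and careful case analysis.
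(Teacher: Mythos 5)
Your proposal follows exactly the paper's route: first invoke Lemma~\ref{lem:Dirichlet_exponential_principle} with $f=\mu$, $g=\log$ to produce Terms 1 and 5 plus a residual double sum $\sum_{n\leqslant R/U}\log n\sum_{R/n<m\leqslant R_1/n}\mu(m)\,e(F(mn))$, and then decompose the inner $\mu$-sum with Lemma~\ref{lem:mu}; the admissibility check $V\leqslant(R/n)^{1/2}$ via $V^2\leqslant U\leqslant R/n$ and the split of the $a_m$-range at $m=V$ to obtain Terms 2 and 3 are both correct and match the paper's argument.

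However, your account of where the $\log h$ in Terms 2 and 3 comes from does not hold up, and this is worth flagging because it points to an apparent inconsistency in the paper itself. You attribute it to ``propagation of the $\log$-factor sitting on the inner variable,'' but after the hyperbola step the $\log$ attaches to the \emph{outer} variable $n$ of the residual double sum; the inner sum over $m$ carries only the weight $\mu(m)\,e(F(mn))$ with no logarithm. Lemma~\ref{lem:mu} as stated then decomposes that inner sum into a bare $h$-sum with coefficients $a_m,b_m\ll m^\epsilon$ and no $\log$. Tracing through, one obtains $-\sum_n\log n\sum_{m\leqslant V^2}a_m\sum_h e(F(mnh))+\cdots$ \emph{without} a $\log h$ factor, so the $\log h$ in the stated identity cannot be derived from the two quoted lemmas. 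In fact the paper's displayed proof exhibits the same unexplained $\log h$; either Lemma~\ref{lem:mu} has been recorded without a logarithm it should carry, or the $\log h$ in Lemma~\ref{lem:Vaughan_bis} is spurious. The discrepancy is harmless downstream, since in Lemma~\ref{lem:Lambda_exp_bord} all inner coefficients are normalised to size $O(1)$ before Lemma~\ref{lem:sarrob04} is applied, but asserting ``propagation'' is not a derivation: a careful write-up should either drop the $\log h$ or explicitly reconcile it with the stated form of Lemma~\ref{lem:mu} rather than rationalise it away.
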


\begin{proof}
Lemma~\ref{lem:Dirichlet_exponential_principle} with $f=\mu$ and $g=\log$ yields
\begin{multline*}
   \sum_{R < n \leqslant R_1} \Lambda(n) \, e \left( F(n) \right) = \sum_{n \leqslant 2U} \mu(n) \sum_{\frac{R}{n} < m \leqslant \frac{R_1}{n}} e \left( F(mn) \right) \log m \\
   + \sum_{n \leqslant \frac{R}{U}} \log n \sum_{\frac{R}{n} < m \leqslant \frac{R_1}{n}} \mu(m)  \, e \left( F(mn) \right) - \sum_{U < n \leqslant 2U} \mu(n) \sum_{\frac{R}{n} < m \leqslant \frac{R}{U}} e \left( F(mn) \right) \log m,
\end{multline*}
and we treat the inner sum of the $2$nd sum with Lemma~\ref{lem:mu}. The assumption $V \leqslant (R/n)^{1/2}$ for all $1 \leqslant n \leqslant R/U$ is then ensured via $(R/n)^{1/2} \geqslant U^{1/2} \geqslant V$. This $2$nd sum is then equal to
\begin{align*}
   &= \sum_{n \leqslant \frac{R}{U}} \log n \left\lbrace -\sum_{m \leqslant V} a_m \sum_{\frac{R}{mn} < h \leqslant \frac{R_1}{mn}} e \left( F(mnh)\right) \log h \right. \\
   & \qquad \left. - \sum_{V < m \leqslant V^2} a_m \sum_{\frac{R}{mn} < h \leqslant \frac{R_1}{mn}} e \left( F(mnh)\right) \log h \right. \\
   & \qquad \left. + \sum_{V < m \leqslant \frac{R_1}{Vn}} b_m \sum_{\max \left( \frac{R}{mn},V \right)  < h \leqslant \frac{R_1}{mn}} \mu(h) \, e \left( F(mnh)\right) \right\rbrace,
\end{align*}
as required.
\end{proof}

Finally, we will make use of the following estimate which is currently one of the best for estimating triple exponential sums of type II. See \cite{sarrob04}.

\begin{lemma}
\label{lem:sarrob04}
Let $X>0$, $H,M,N \in \Z_{\geqslant 1}$, $\left( a_h \right)$, $\left( b_{m,n}\right) \in \C$ such that $\left| a_h\right| ,\left| b_{m,n}\right| \leqslant 1$, $\alpha ,\beta ,\gamma \in \R$ such that $\alpha \left( \alpha -1\right) \beta \gamma \neq 0$ and $\epsilon >0$. Then
\begin{align*}
   & \left( HMN\right)^{-\epsilon }\sum_{H < h \leqslant 2H} a_h \sum_{M < m \leqslant 2M} \sum_{N < n \leqslant 2N} b_{m,n} \, e\left( X\left( \frac hH\right)^{\alpha }\left( \frac mM\right)^{\beta }\left( \frac nN\right)^{\gamma } \right) \\
   & \ll \left( X H^2 M^3 N^3 \right)^{1/4} + H \left( MN\right)^{3/4} + H^{1/2} M N + X^{-1/2} H M N.
\end{align*}
\end{lemma}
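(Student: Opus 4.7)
The proof follows the Robert--Sargos method for type~II triple exponential sums, combining Cauchy--Schwarz in the outer variable $h$ with van der Corput's $B$-process applied to the smooth inner sum. Denoting the left-hand side (without the $(HMN)^{-\epsilon}$ factor) by $S$, the plan is to first apply Cauchy--Schwarz in $h$, using $|a_h|\le 1$, to obtain
$$|S|^2 \le H \sum_{H<h\le 2H}\Bigl|\sum_{m,n} b_{m,n}\, e\bigl(X(h/H)^\alpha (m/M)^\beta (n/N)^\gamma\bigr)\Bigr|^2.$$
Expanding the square and swapping the order of summation produces an $h$-sum indexed by quadruples $(m_1,n_1,m_2,n_2)$; the diagonal $(m_1,n_1)=(m_2,n_2)$ contributes $H^2 MN$, which after the final square root yields the $H^{1/2}MN$ term in the bound.

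For the off-diagonal contribution, set $\Delta := (m_1/M)^\beta (n_1/N)^\gamma - (m_2/M)^\beta (n_2/N)^\gamma$. The resulting inner $h$-sum then has monomial phase $F(h) = X\Delta\,(h/H)^\alpha$ whose second derivative satisfies $|F''(h)|\asymp X|\Delta| H^{-2}$, thanks to the hypothesis $\alpha(\alpha-1)\neq 0$. Van der Corput's $B$-process, equivalently the exponent pair $(\tfrac12,\tfrac12)$, then bounds this sum by $(X|\Delta|)^{1/2} + (X|\Delta|/H)^{-1/2}$, with the second term replaced by $O(H)$ in the Kusmin--Landau regime where $X|\Delta|$ is too small for stationary phase to apply.

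The final step is to sum these bounds over the off-diagonal quadruples via a dyadic decomposition in $|\Delta|$. Using $\beta\gamma\neq 0$, a counting lemma bounds the number of quadruples with $|\Delta|\sim \delta$ by roughly $(MN)^2 \min(1,\delta)$ up to an $(HMN)^\epsilon$ factor. Tracking each dyadic range then yields the main $(XH^2M^3N^3)^{1/4}$ term from the stationary-phase contribution, the secondary $H(MN)^{3/4}$ term from the short-sum/boundary estimate arising when $X|\Delta|$ is moderate, and the $X^{-1/2}HMN$ term from the Kusmin--Landau range where $\Delta$ is negligibly small. I expect the main obstacle to be this counting step: obtaining the sharp bound on the level sets $\{|\Delta|\sim\delta\}$ is the technical heart of the argument, and it is here that the product structure of the phase is essential. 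This typically requires a further Cauchy--Schwarz or an $A$-process on one of the $(m,n)$ variables to reduce the count to a dispersion-type estimate, from which the four terms balance in the final optimization.
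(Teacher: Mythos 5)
The paper does not prove this lemma at all; it is quoted directly from Robert and Sargos \cite{sarrob04}, and the surrounding text makes clear it is being imported as a black box. So there is no in-paper argument to compare against, and your sketch should be judged only on its own internal consistency.

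On that score, the broad outline (Cauchy--Schwarz to remove coefficients, van der Corput's second-derivative test on the resulting smooth monomial sum, then a spacing estimate for the level sets of $\Delta$) is indeed the general neighbourhood in which Robert and Sargos operate, but several details are off. First, the diagonal arithmetic is wrong: $\sqrt{H^2 MN}=H(MN)^{1/2}$, not $H^{1/2}MN$, so Cauchy--Schwarz in $h$ cannot produce the third term of the stated bound from its diagonal. The term $H^{1/2}MN$ emerges naturally if the Cauchy--Schwarz is instead taken over the $(m,n)$ pair (removing $b_{m,n}$), whose diagonal then contributes $(MN)^{2}H$ and hence $MN\sqrt{H}$ after the final square root; that direction also leaves a genuinely smooth double sum in $(m,n)$ with phase $X\bigl[(h_1/H)^{\alpha}-(h_2/H)^{\alpha}\bigr](m/M)^{\beta}(n/N)^{\gamma}$, which is the structure the Robert--Sargos multidimensional van der Corput machinery is built to exploit. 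Second, your $B$-process bound is misstated: for a sum of length $H$ with $|F''|\asymp X|\Delta|H^{-2}$ the second-derivative test gives $(X|\Delta|)^{1/2}+H(X|\Delta|)^{-1/2}$, whereas you wrote $(X|\Delta|)^{1/2}+(X|\Delta|/H)^{-1/2}$, which is short by a factor $H^{1/2}$ in the second term. Finally, you correctly identify the spacing estimate for $\{|\Delta|\sim\delta\}$ as the technical heart of the matter and then leave it unproved, remarking only that it ``typically requires a further Cauchy--Schwarz or an $A$-process.'' That estimate is precisely the nontrivial content of Robert and Sargos's separate spacing lemma, and without it your outline does not close; combined with the two errors above, the sketch falls short of a proof of the stated result.
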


\subsection{Exponential sums over primes}
\label{SS:exp sum prime}
The aim of this section is the proof of the next estimate.

\begin{lemma}
\label{lem:Lambda_exp_bord}
Let $\alpha, c_0, \epsilon > 0 \in \R_{> 0}$. Uniformly for $x \geqslant e$, $1 \leqslant R < R_1 \leqslant 2R$ such that
\begin{equation}
   R \leqslant c_0 x^{\frac{6}{6 \alpha+1}} \label{eq:cond_lemma}
\end{equation}
we have
\begin{multline*}
   x^{-\epsilon} \sum_{R < n \leqslant R_1} \Lambda(n) e \left( \frac{x}{n^\alpha} \right) \ll T^{\frac{5k-4 \ell+4}{16(k-\ell)+17}} R^{\frac{11(k - \ell)+12}{16(k-\ell)+17}} \\
   + T^{\frac{k}{16(k-\ell)+17}} R^{\frac{15(k - \ell)+16}{16(k-\ell)+17}} + T^{1/4} R^{2/3} + T^k R^{\ell-k} + R^{11/12},
\end{multline*}
where $(k,\ell)$ is an exponent pair and $T:= xR^{-\alpha}$.
\end{lemma}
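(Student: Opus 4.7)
The plan is to apply Lemma~\ref{lem:Vaughan_bis} with parameters $U$ and $V$ (subject to $1 \leqslant V \leqslant U^{1/2}$ and $1 \leqslant U \leqslant R$), both to be chosen at the end, splitting the sum $\sum_{R < n \leqslant R_1} \Lambda(n) \, e(x/n^\alpha)$ into the five subsums $\mathcal S_1,\ldots,\mathcal S_5$ appearing in that lemma. Each subsum will then be estimated using either the exponent pair $(k,\ell)$ (for type~I sums and for the innermost single-variable sums of the short triple sums) or the triple-sum bound of Lemma~\ref{lem:sarrob04} (for the genuine type~II sum $\mathcal S_4$).

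For the type~I sums $\mathcal S_1$ and $\mathcal S_5$, I would remove the $\log m$ weight by partial summation and apply the exponent pair to the inner sum over $m$. Since the phase $F(mn) = x(mn)^{-\alpha}$ satisfies $|\partial_m^r F(mn)| \asymp T (R/n)^{-r}$ when $m \asymp R/n$, the inner sum is $\ll T^k (R/n)^{\ell-k}$; summing trivially over $n$ then contributes a bound of the shape $U^{1+k-\ell} T^k R^{\ell-k}$, up to $x^\epsilon$. For the short triple sums $\mathcal S_2$ (with $m \leqslant V$) and $\mathcal S_3$ (with $V < m \leqslant V^2$), the exponent pair is applied to the innermost sum over $h$ of length $\sim R/(mn)$, while the other two sums are handled trivially using $|a_m|, |b_m| \ll m^\epsilon$. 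This yields contributions of the shape $V^{c} T^k R^{\ell-k} (R/U)^{1+k-\ell}$, where $c$ is linear in $1+k-\ell$.

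The core of the proof is the genuine type~II sum $\mathcal S_4$. Decomposing dyadically so that $n \sim N$, $m \sim M$, $h \sim H$ with $HMN \asymp R$, $H, M \geqslant V$ and $N \leqslant R/U$, Lemma~\ref{lem:sarrob04} applied on each dyadic box with $X = T$, $a_h = \mu(h)$ and $b_{m,n} = b_m \log n$ produces the four contributions
\[
T^{1/4} R^{3/4} H^{-1/4} + H^{1/4} R^{3/4} + R H^{-1/2} + T^{-1/2} R.
\]
Summing over the $O(\log^3 R)$ dyadic boxes (absorbed in $x^\epsilon$) and retaining the worst case over $H \in [V, R/V^2]$ gives an estimate of $\mathcal S_4$ in terms of $V, T, R$. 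The specific terms $T^{1/4} R^{2/3}$ and $R^{11/12}$ that appear in the statement of Lemma~\ref{lem:Lambda_exp_bord} emerge from the internal dyadic optimisation (for instance $H \asymp R^{1/3}$ in the first contribution, and $H \asymp R^{2/3}$ in the second).

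The last step is the joint optimisation of $U$ and $V$. Balancing the type~I contribution $U^{1+k-\ell} T^k R^{\ell-k}$ against the $V$-dependent contributions from $\mathcal S_2,\mathcal S_3$ and the leading term $T^{1/4} R^{3/4} V^{-1/4}$ from $\mathcal S_4$ pins down $U$ and $V$ in terms of $T$ and $R$; the algebra of this balance is what produces the denominator $16(k-\ell)+17$ in the first two terms of the lemma. The main obstacle is this final step: there are several competing error terms with different $U, V$ dependencies, and extracting the stated exponents requires a careful multi-parameter optimisation together with verification that the optimal $U, V$ lie in the admissible ranges. The condition \eqref{eq:cond_lemma}, equivalent to $T \geqslant c_1 R^{1/6}$ for some $c_1 > 0$, is precisely what is needed to ensure that the remainder $T^{-1/2} R$ from Lemma~\ref{lem:sarrob04} is absorbed by the other error terms and that the optimal $V$ lies in $[1, U^{1/2}]$.
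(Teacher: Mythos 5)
Your overall framework is right: apply Lemma~\ref{lem:Vaughan_bis} with parameters $U,V$ to split the $\Lambda$-twisted sum into five subsums, estimate each, and then optimise. The treatments of $\mathcal S_1,\mathcal S_5$ (exponent pair on the inner $m$-sum) and of $\mathcal S_4$ (Lemma~\ref{lem:sarrob04} on each dyadic box, giving $T^{1/4}R^{3/4}H^{-1/4}+H^{1/4}R^{3/4}+RH^{-1/2}+T^{-1/2}R$ with $HMN\asymp R$) match the paper in structure, although your admissible range for $H$ should be $[V,R/V]$ rather than $[V,R/V^{2}]$.

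The genuine gap is in $\mathcal S_2$ and $\mathcal S_3$. You propose to treat both by applying the exponent pair to the innermost $h$-sum and summing the outer $n$-sum \emph{trivially} over $n\leqslant R/U$, obtaining a factor $(R/U)^{1+k-\ell}$. That is too lossy. The paper instead splits $\mathcal S_2$ at $n=U$: the part with $n\leqslant U$ is handled by the exponent pair (producing the manageable factor $U^{1+k-\ell}$ rather than $(R/U)^{1+k-\ell}$), while the part with $U<n\leqslant R/U$ is treated, like $\mathcal S_3$, as a genuine triple sum via Lemma~\ref{lem:sarrob04}. The three sums $\mathcal S_3$, $\mathcal S_4$ and the large-$n$ part of $\mathcal S_2$ all require the triple-sum bound; using only the one-variable exponent pair on $\mathcal S_3$ yields, for $(k,\ell)=(1/2,1/2)$ and after the two-parameter balance you describe, a bound of order $T^{1/10}R^{9/10}$, whereas the lemma asserts $T^{1/34}R^{16/17}$. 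These differ: $T^{1/10}R^{9/10}\gg T^{1/34}R^{16/17}$ once $T\gg R^{7/12}$, and the hypothesis~\eqref{eq:cond_lemma} only forces $T\gg R^{1/6}$ from below with no upper constraint, so your route does not reach the stated estimate in that range. The remaining terms $T^{1/4}R^{2/3}$ and $R^{11/12}$ also do not arise from an ``internal dyadic optimisation'' of $H$ in $\mathcal S_4$: the paper first chooses $V=U^{1/3}$ (which makes $(R^3V^2)^{1/4}\leqslant RV^{-1/4}$ and collapses the two-parameter problem to one), obtaining $T^kR^{\ell-k}U^{4(1+k-\ell)/3}+(T^{1/4}R^{3/4}+R)U^{-1/12}+T^{-1/2}R$, and then applies the Srinivasan-type optimisation over $U\in[1,R]$; the boundary $U=R$ produces $T^{1/4}R^{2/3}$ and $R^{11/12}$, while $U=1$ produces $T^kR^{\ell-k}$, and the denominator $16(k-\ell)+17=12\bigl(\tfrac{4(1+k-\ell)}{3}+\tfrac{1}{12}\bigr)$ comes directly from that balance. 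Finally, the role of~\eqref{eq:cond_lemma} is simply that $T\geqslant c_1R^{1/6}$ makes $T^{-1/2}R\ll R^{11/12}$; the constraint $V\leqslant U^{1/2}$ is automatic from the choice $V=U^{1/3}$ and is not where that hypothesis is used.
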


\begin{proof}
Let $S$ be the sum of the left-hand side, $1 \leqslant U \leqslant R$ and $1 \leqslant V \leqslant U^{1/2}$. We use Lemma~\ref{lem:Vaughan_bis} with $F(n) = xn^{- \alpha}$, and for which we denote the five sums by $S_1,\dotsc,S_5$. Lemma~\ref{lem:sarrob04} will be applied with $\left( \alpha, \beta, \gamma \right) = \left( - \alpha, - \alpha, - \alpha \right)$ and $X := x(MNH)^{- \alpha}$. We also set $\ell_n := R^{-\epsilon} \log n$, $\alpha_m := R^{-\epsilon} a_m$ and $\beta_m := R^{-\epsilon} b_m$. Finally, recall that any multiplicative constraint $R < hmn \leqslant R_1$ appearing in the sums below may be omited at a cost of a factor $(\log R)^2$.
Now using the exponent pair $(k,\ell)$, we first get
\begin{align*}
   S_1 & \ll \sum_{n \leqslant 2U} \ \underset{\frac{R}{n} \leqslant M \leqslant \frac{R_1}{n}}{\max} \ \left| \sum_{\frac{R}{n} < m \leqslant M} e \left( \frac{x}{(mn)^\alpha} \right) \right| \ \log R \\
   & \ll \sum_{n \leqslant 2U} \left\lbrace x^k R^{\ell - k(\alpha+1)} n^{k-\ell} + \frac{R^{1+\alpha}}{nx} \right\rbrace \log R \\
   & \ll x^k R^{\ell - k(\alpha+1)} U^{1-\ell+k} \log R + R^{1+\alpha} x^{-1} (\log R)^2.
\end{align*}
Then, using Lemma~\ref{lem:sarrob04} with $(H,M,N) \leftrightarrow (M,N,H)$, we derive
\begin{align*}
   R^{- \epsilon} S_3 & \ll \max_{N \leqslant\frac{R}{U}} \ \max_{V < M \leqslant V^2} \ \max_{\substack{H \geqslant 1 \\ HMN \asymp R}} \left| \underset{R < hmn \leqslant R_1}{\sum_{N < n \leqslant 2N} \ell_n \sum_{M < m \leqslant 2M} \alpha_m \sum_{H < h \leqslant 2H} \alpha_h \, e \left( \frac{x}{(hmn)^\alpha}\right)}  \right| \\
   & \ll \max_{N \leqslant\frac{R}{U}} \ \max_{V < M \leqslant V^2} \ \max_{\substack{H \geqslant 1 \\ HMN \asymp R}} \left\lbrace \left(x M^{2-\alpha} (NH)^{3-\alpha} \right)^{1/4} + M(NH)^{3/4} \right. \\
   & \qquad \left. + \, M^{1/2} NH + x^{-1/2} (MNH)^{1+\alpha/2} \right\rbrace \\
   & \ll \max_{N \leqslant\frac{R}{U}} \ \max_{V < M \leqslant V^2} \left\lbrace \left( xM^{-1}\right)^{1/4} R^{\frac{3-\alpha}{4}} + (R^3M)^{1/4} + RM^{-1/2} + x^{-1/2} R^{1+\alpha/2} \right\rbrace \\
   & \ll \left( xV^{-1}\right)^{1/4} R^{\frac{3-\alpha}{4}} + (R^3V^2)^{1/4} + RV^{-1/2} + x^{-1/2} R^{1+\alpha/2}.
\end{align*}
Similarly,
\begin{align*}
   R^{- \epsilon} S_4 & \ll \max_{N \leqslant\frac{R}{U}} \ \max_{V < M \leqslant \frac{R_1}{V}} \ \max_{\substack{H \geqslant 1 \\ HMN \asymp R}} \left| \underset{R < hmn \leqslant R_1}{\sum_{N < n \leqslant 2N} \ell_n \sum_{M < m \leqslant 2M} \beta_m \sum_{H < h \leqslant 2H} \alpha_h \, e \left( \frac{x}{(hmn)^\alpha}\right)}  \right| \\
   & \ll \max_{N \leqslant\frac{R}{U}} \ \max_{V < M \leqslant \frac{R_1}{V}} \ \max_{\substack{H \geqslant 1 \\ HMN \asymp R}} \left\lbrace \left(x M^{2-\alpha} (NH)^{3-\alpha} \right)^{1/4} + M(NH)^{3/4} \right. \\
   & \qquad \left. + \, M^{1/2} NH + x^{-1/2} (MNH)^{1+\alpha/2} \right\rbrace \\
   & \ll \max_{N \leqslant\frac{R}{U}} \ \max_{V < M \leqslant \frac{2R}{V}} \left\lbrace \left( xM^{-1}\right)^{1/4} R^{\frac{3-\alpha}{4}} + (R^3M)^{1/4} + RM^{-1/2} + x^{-1/2} R^{1+\alpha/2} \right\rbrace \\
   & \ll \left( xV^{-1}\right)^{1/4} R^{\frac{3-\alpha}{4}} + RV^{-1/4} + x^{-1/2} R^{1+\alpha/2}.
\end{align*}
Next, we split $S_2$ into two subsums
   $$S_2 = \left( \sum_{n \leqslant U} + \sum_{U < n \leqslant \frac{R}{U}} \right) \log n \sum_{m \leqslant V} a_m \sum_{\frac{R}{mn} < h \leqslant \frac{R_1}{mn}} e \left( F(mnh)\right) \log h := S_{21} + S_{22},$$
and, using Lemma~\ref{lem:sarrob04} with $(H,M,N) \leftrightarrow (N,M,H)$, we get
\begin{align*}
   R^{-\epsilon} S_{22} & \ll \max_{U < N \leqslant \frac{R}{U}} \ \max_{M \leqslant V} \, \max_{\substack{H \geqslant 1 \\ HMN \asymp R}} \left| \underset{R < hmn \leqslant R_1}{\sum_{N < n \leqslant 2N} \ell_n \sum_{M < m \leqslant 2M} \alpha_m \sum_{H < h \leqslant 2H} \ell_h \, e \left( F(mnh)\right)}  \right| \\
   & \ll \max_{U < N \leqslant \frac{R}{U}} \ \max_{M \leqslant V} \, \max_{\substack{H \geqslant 1 \\ HMN \asymp R}} \left\lbrace \left(x N^{2-\alpha} (MH)^{3-\alpha} \right)^{1/4} + N(MH)^{3/4} \right. \\
   & \qquad \left. + \, N^{1/2} MH + x^{-1/2} (MNH)^{1+\alpha/2} \right\rbrace \\
   & \ll \max_{U < N \leqslant \frac{R}{U}} \ \max_{M \leqslant V}\left\lbrace \left( xN^{-1}\right)^{1/4} R^{\frac{3-\alpha}{4}} + (R^3N)^{1/4} + RN^{-1/2} + x^{-1/2} R^{1+\alpha/2} \right\rbrace \\
   & \ll  \left( xU^{-1}\right)^{1/4} R^{\frac{3-\alpha}{4}} + RU^{-1/4} + RU^{-1/2} + x^{-1/2} R^{1+\alpha/2}\footnotemark,
\end{align*}
and the exponent pair $(k,\ell)$ yields
\begin{align*}
   R^{-\epsilon} S_{21} & \ll \sum_{n \leqslant U} \ \sum_{m \leqslant V} \ \underset{\frac{R}{mn} \leqslant H \leqslant \frac{R_1}{mn}}{\max} \ \left| \sum_{\frac{R}{mn} < h \leqslant H} e \left( F(mnh)\right) \right| \\
   & \ll \sum_{n \leqslant U} \ \sum_{m \leqslant V} \frac{1}{(mn)^{\ell-k}} \left\lbrace x^k R^{\ell - k(\alpha+1)} + \frac{R^{1+\alpha}}{mnx} \right\rbrace \\
   & \ll x^k R^{\ell - k(\alpha+1)} (UV)^{1-\ell+k}  + R^{1+\alpha} x^{-1}.
\end{align*}
Finally, using $1 \leqslant V \leqslant U^{1/2}$, the full sum $S$ does not exceed
\begin{align*}
   R^{-\epsilon} S & \ll x^k R^{\ell - k(\alpha+1)} (UV)^{1-\ell+k} + \left( x^{1/4} R^{\frac{3-\alpha}{4}} + R \right) V^{-1/4} \\
   & \qquad  + (R^3V^2)^{1/4} +  x^{-1/2} R^{1+\alpha/2} + R^{1+\alpha} x^{-1} \\
   & \ll x^k R^{\ell - k(\alpha+1)} (UV)^{1-\ell+k} + x^{1/4} R^{\frac{3-\alpha}{4}} V^{-1/4} \\
   & \qquad + RV^{-1/4} + (R^3V^2)^{1/4} + x^{-1/2} R^{1+\alpha/2},
\end{align*}
where we also used the fact that the condition $R \leqslant c_0 x^{\frac{6}{6 \alpha + 1}}$ implies that $x^{-1/2} R^{1+\alpha/2} \geqslant c_0^{- \alpha/2} R^{1+\alpha} x^{-1}$. Now choose $V = U^{1/3}$, so that $V \leqslant R^{1/3}$. This inequality implying that $(R^3V^2)^{1/4} \leqslant RV^{-1/4}$, and therefore
$$R^{-\epsilon} S \ll x^k R^{\ell - k(\alpha+1)} U^{4(1-\ell+k)/3} + x^{1/4} R^{\frac{3-\alpha}{4}} U^{-1/12} + RU^{-1/12} + x^{-1/2} R^{1+\alpha/2}.$$
Optimizing over $U \in \left[ 1,R \right]$ yields
\begin{align*}
   x^{-\epsilon} \sum_{R < n \leqslant R_1} \Lambda(n) e \left( \frac{x}{n^\alpha} \right) & \ll T^{\frac{5k-4 \ell+4}{16(k-\ell)+17}} R^{\frac{11(k - \ell)+12}{16(k-\ell)+17}} + T^{\frac{k}{16(k-\ell)+17}} R^{\frac{15(k - \ell)+16}{16(k-\ell)+17}} \\
    & \qquad + T^{1/4} R^{2/3} + T^k R^{\ell-k} + R^{11/12} + x^{-1/2} R^{1+\alpha/2},
\end{align*}
and we conclude the proof noticing that the term $x^{-1/2} R^{1+\alpha/2}$ is absorbed by $R^{11/12}$ since $R \leqslant c_0 x^{\frac{6}{6 \alpha + 1}}$.
\end{proof}

\subsection{Proof of Lemma~\ref{lem:o(x)}}
\label{SS:proof lem 3}
We will use Lemma~\ref{lem:Lambda_exp_bord} with van der Corput's exponent pair $(k,\ell) = \left( \frac{1}{2} , \frac{1}{2} \right)$, which yields
\begin{equation}
   x^{-\epsilon} \sum_{N < n \leqslant N_1} \Lambda(n) e \left( \frac{x}{n^\alpha} \right) \ll T^{9/34} N^{12/17} + T^{1/34} N^{16/17} + T^{1/4} N^{2/3} + T^{1/2} + N^{11/12} \label{eq:exp_vdc},
\end{equation}
provided that \eqref{eq:cond_lemma} is fullfiled with $R$ replaced by $N$ and where $T := xN^{- \alpha}$. Set $S(x):=\sum_{t,2}(x)$. Then
$$S(x) = \sum_{\left( t^{-t}x \right)^{\frac{1}{t+1}} < p \leqslant x^{\frac{1}{t+1}+ \epsilon}} \left( \psi \left( \sqrt[t]{\frac{x}{p+1}} \, \right)-\psi \left( \sqrt[t]{\frac{x}{p}} \, \right) \right)$$
and recall that $t \geqslant 1$. First, an usual splitting argument yields
\begin{equation}
  S(x) \ll \max_{\left( t^{-t}x \right)^{\frac{1}{t+1}} < N \leqslant x^{\frac{1}{t+1}+ \epsilon}} \left| \sum_{N < p \leqslant 2N} \left( \psi \left( \sqrt[t]{\frac{x}{p+1}} \, \right)-\psi \left( \sqrt[t]{\frac{x}{p}} \, \right) \right) \right| \log x. \label{eq:step_0}
\end{equation}
Next, for all integers $\left( t^{-t}x \right)^{\frac{1}{t+1}} < N \leqslant x^{\frac{1}{t+1}+ \epsilon}$ and all real numbers $\delta \geqslant 0$,
\begin{equation}
    \sum_{N < p \leqslant 2N} \psi \left( \sqrt[t]{\frac{x}{p+\delta}} \, \right) \ll \frac{1}{\log N} \ \max_{N \leqslant N_1 \leqslant 2N} \sum_{N < n \leqslant N_1} \Lambda(n) \, \psi \left( \sqrt[t]{\frac{x}{n+\delta}} \, \right) + N^{1/2}. \label{eq:step_1}
\end{equation}
Since $\Lambda$ is a real-valued function such that $\Lambda(n) \leqslant \log n \ll n^\epsilon$, we have for all $H \in \Z_{\geqslant 1}$, $\delta \geqslant 0$ and $\epsilon > 0$ that
$$x^{- \epsilon} \sum_{N < n \leqslant N_1} \Lambda(n) \, \psi \left( \sqrt[t]{\frac{x}{n+\delta}} \, \right) \ll \frac{N}{H} + \sum_{h = 1}^H \frac{1}{h} \, \left| \sum_{N < n \leqslant N_1} \Lambda(n) \, e \left( h \sqrt[t]{\frac{x}{n+\delta}} \, \right) \right|.$$
Since $\delta \geqslant 0$, we see that the function $\varphi : u \mapsto \sqrt[t]{\dfrac{x}{u}} - \sqrt[t]{\dfrac{x}{u+\delta}}$ is decreasing, and satisfies $\left| \varphi(u) \right| \leqslant t^{-1} \delta x^{1/t} N^{-1-1/t}$ for all $u \in \left[ N,2N\right]$, so that we derive by Abel summation
\begin{multline}
   x^{- \epsilon} \sum_{N < n \leqslant N_1} \Lambda(n) \, \psi \left( \sqrt[t]{\frac{x}{n+\delta}} \, \right) \ll \frac{N}{H} \\
   + \sum_{h = 1}^H \frac{1}{h} \left( 1 + \frac{h \delta x^{1/t}}{N^{1+1/t}}\right) \max_{N \leqslant N_2 \leqslant N_1} \left| \sum_{N < n \leqslant N_2} \Lambda(n) \, e \left( h \sqrt[t]{\frac{x}{n}}\right) \right|. \label{eq:step_3}
\end{multline}
Now using \eqref{eq:exp_vdc} with $x$ replaced by $h x^{1/t}$ and $\alpha = \frac{1}{t}$ we derive
\begin{multline*}
   x^{-\epsilon} \sum_{N < n \leqslant N_2} \Lambda(n) \, e \left( h \sqrt[t]{\frac{x}{n}}\right) \ll h^{9/34} \left( x^3 N^{8t-3} \right)^{\frac{3}{34t}} +  h^{1/34} \left( x N^{32t-1} \right)^{\frac{1}{34t}} \\
    + h^{1/4} \left( x^3 N^{8t-3} \right)^{\frac{1}{12t}} + h^{1/2}\left( x N^{-1} \right)^{\frac{1}{2t}} + N^{11/12},
\end{multline*}
provided that $N \leqslant c_0 \left( h^t x \right)^{\frac{6}{t+6}}$ to ensure \eqref{eq:cond_lemma}, and inserting this bound in \eqref{eq:step_3} we get
\begin{align*}
   x^{- \epsilon} \sum_{N < n \leqslant N_1} \Lambda(n) \, \psi \left( \sqrt[t]{\frac{x}{n+\delta}} \, \right) &\ll \frac{N}{H} + H^{9/34} \left( x^3 N^{8t-3} \right)^{\frac{3}{34t}} + \delta H^{43/34} \left(x^{43} N^{-10t-43} \right)^{\frac{1}{34t}} \\
   & + H^{1/34} \left( x N^{32t-1} \right)^{\frac{1}{34t}} + \delta H^{35/34} \left(x^{35} N^{-2t-35} \right)^{\frac{1}{34t}} \\
   & + H^{1/4} \left( x^3 N^{8t-3} \right)^{\frac{1}{12t}} + \delta H^{5/4} \left( x^{15} N^{-4t-15} \right)^{\frac{1}{12t}} \\
   & + H^{1/2} \left( x N^{-1} \right)^{\frac{1}{2t}} + \delta H^{3/2} \left( x^3 N^{-2t-3} \right)^{\frac{1}{2t}} \\
   & + N^{11/12} + \delta H \left( x^{12} N^{-t-12} \right)^{\frac{1}{12t}},
\end{align*}
provided that $N \leqslant c_0 x^{\frac{6}{t+6}}$. Now Srinivasan's optimization lemma \cite[Lemma~4]{sri62} yields
\begin{align}
   x^{- \epsilon} \sum_{N < n \leqslant N_1} \Lambda(n) \, \psi \left( \sqrt[t]{\frac{x}{n+\delta}} \, \right) & \ll \left( x^9 N^{33t-9} \right)^{\frac{1}{43t}} + \delta^{34/77} \left(x^{43} N^{33t-43} \right)^{\frac{1}{77t}} \notag  \\
   & \qquad \qquad + \left( x^3 N^{8t-3} \right)^{\frac{3}{34t}} + \delta \left(x^{43} N^{-10t-43} \right)^{\frac{1}{34t}} \notag \\
   & + \left( x N^{33t-1} \right)^{\frac{1}{35t}} + \delta^{34/69} \left( x^{35} N^{33t-35} \right)^{\frac{1}{69t}} \notag \\
   & \qquad \qquad + \left( x N^{32t-1} \right)^{\frac{1}{34t}} + \delta \left(x^{35} N^{-2t-35} \right)^{\frac{1}{34t}} \notag \\
   & + \left( x^{3} N^{11t-3} \right)^{\frac{1}{15t}}  + \delta^{4/9} \left( x^{15} N^{11t-15} \right)^{\frac{1}{27t}} \notag \\
   & \qquad \qquad + \left( x^3 N^{8t-3} \right)^{\frac{1}{12t}} + \delta \left( x^{15} N^{-4t-15} \right)^{\frac{1}{12t}} \notag \\
   & + \left( x N^{t-1} \right)^{\frac{1}{3t}} + \delta^{2/5} \left( x^3 N^{t-3} \right)^{\frac{1}{5t}} \notag \\
   & \qquad \qquad + \left( x N^{-1} \right)^{\frac{1}{2t}} + \delta \left( x^3 N^{-2t-3} \right)^{\frac{1}{2t}} \notag \\
   & + \delta^{1/2} \left( x^{12} N^{11t-12} \right)^{\frac{1}{24t}} \notag \\
   & \qquad \qquad + \delta \left( x^{12} N^{-t-12} \right)^{\frac{1}{12t}} + N^{11/12} \label{eq:final_step},
\end{align}
and the result follows by substituting the estimate \eqref{eq:final_step} into \eqref{eq:step_1} with $\delta=0$ and $\delta=1$ respectively, and then in \eqref{eq:step_0}, yielding \eqref{psi sum} as required.

\section{Proof of Corollary \ref{thm:beatty}}
For the sets based on the Piatestski-Shapiro sequences we infer from \cite{Guo} that
$$\pi(S_{\fl{n^c},x})\sim \frac{(x^c)^{1/c}}{\log x^c}=\frac{x}{c\log x}.$$
Since $\alpha \ge 1$ it follows that $\fl{n^c}\ne \fl{m^c}$ for any $n \ne m$. Therefore
$$\frac{|S_{\fl{n^c},x}|}{\log |S_{\fl{n^c},x}|}\sim\frac{x}{\log x}\not \sim\pi(S_{\fl{n^c},x}).$$

For the sets based on the Beatty sequence we infer from \cite{Guo} that
$$\pi(S_{\fl{\alpha n+ \beta},x})\sim \frac{\alpha x + \beta}{\alpha \log(\alpha x + \beta)}\sim \frac{x}{\log x}.$$
Since $\alpha \ge 1$ it follows that $\fl{\alpha n+ \beta}\ne \fl{\alpha m+ \beta}$ for any $n \ne m$. Therefore
$$\frac{|S_{\fl{\alpha n+ \beta},x}|}{\log |S_{\fl{\alpha n+ \beta},x}|}\sim\frac{x}{\log x}\sim\pi\(S_{\fl{\alpha n+ \beta},x}\).$$
The density is given by
$$\frac{|S_{\fl{\alpha n+ \beta},x}|}{\max{S_{\fl{\alpha n+ \beta},x}}-\min{S_{S_{\fl{\alpha n+ \beta},x}}}}=
\frac{x}{\fl{\alpha x + \beta}-\fl{\alpha+\beta}},$$
which is asymptotically $\frac1{\alpha}$,
concluding the proof.


%
%



\begin{thebibliography}{9}
%
%
%


\bibitem{ABS} A. G. Abercrombie, W. D. Banks and I.~E.~Shparlinski,
Arithmetic functions on Beatty sequences,
\textit{Acta Arith.\/} {\bf 136} (2009), 81--89.

\bibitem{Akb} Y. Akbal,
Friable values of Piatetski-Shapiro sequences,
\textit{Proc. Amer. Math. Soc.\/} {\bf 145} (2017),   4255--4268.

 \bibitem{BaBa}
R.~C.~Baker and W.~D.~Banks,
Character sums with Piatetski-Shapiro sequences,
\textit{Quart. J.  Math.} {\bf  66} (2015), 393--416.

\bibitem{BBBSW}
R.~C.~Baker, W.~D.~Banks, J.~Br\"udern, I.~E.~Shparlinski and A.~Weingartner,
Piatetski-Shapiro sequences,
\textit{Acta Arith.} {\bf 157} (2013),  37--68.

\bibitem{BBGY}
R.~C.~Baker, W.~D.~Banks, V.~Z.~Guo and A. M. Yeager,
Piatetski-Shapiro primes from almost primes,
\textit{Monatsh Math.\/} {\bf  174} (2014), 357--370.

 \bibitem{BaLi}
R.~C.~Baker and L. Zhao,  Gaps between primes in Beatty sequences,
\textit{Acta Arith.} {\bf 172} (2016),   207--242.

\bibitem{BGS}
W.~D.~Banks,  V.~Z.~Guo and  I.~E.~Shparlinski,
Almost primes of the form $\fl{p^c}$,
\textit{Indag. Math.\/} {\bf 27}  (2016),  423--436.


\bibitem{Bor2}
O.~Bordell\`es,
On certain sums of number theory,
\textit{Int. J. Number Theory\/} \textbf{18} (2022), 2053--2074.


 \bibitem{Che}
 S.~Chern,
 Note on sums involving the Euler function (English summary),
 \textit{Bull. Aust. Math. Soc.} {\bf 100} (2019), 194--200.

\bibitem{GuNe} A.~M.~G{\"u}lo{\v g}lu and C.~W.~Nevans,
Sums with multiplicative functions over a Beatty sequence,
\textit{Bull.  Austral. Math.  Soc.} {\bf 78} (2008), 327--334.

\bibitem{ford02} K. Ford,
Vinogradov's integral and bounds for the Riemann zeta function,
\textit{Proc. Lond. Math. Soc., III. Ser.} \textbf{85} (2002), 565--633.

\bibitem{Guo} V.~Z.~Guo,
Piatetski-Shapiro primes in a Beatty sequence,
\textit{J. Number Theory} {\bf 156} (2015), 317--330.

\bibitem{Harm} G. Harman,  Primes in Beatty sequences in short intervals,
\textit{Mathematika} {\bf 62} (2016), 572--586.

\bibitem {Hey}
R.~Heyman,
Cardinality of a floor function set,
{\text Integers\/} \textbf{19} (2019), A67.

\bibitem {Hey2}
R.~Heyman,
Primes in the floor function set,
{\text Integers\/} \textbf{22} (2022), A59.



 \bibitem{Liu}
 K.~Liu, J.~Wu and Z.~Yang,
 On some sums involving the integral part function,
 \textit{preprint}, Available at arXiv:2109.01382 [math.NT]



\bibitem{LSZ}
K. Liu,  I. E. Shparlinski and T. Zhang,
Squares in Piatetski-Shapiro sequences,
\textit{Acta Arith.} {\bf 181} (2017),  239--252.

 \bibitem{Ma}
 J.~Ma and J.~Wu,
 On a sum involving the Mangoldt function (English summary),
 \textit{Period. Math. Hungar.} {\bf 83} (2021), 39--48.


 \bibitem{Ma2}
 J.~Ma and H.~Sun,
 On a sum involving certain arithmetic functions and the integral part function,
 \textit{Ramanujan J.}, {\bf 60} (2023), 1025--1032.


\bibitem{Ma3}
R.~Ma and J.~Wu,
On the primes in floor function sets,
\textit{Bull. Aust. Math. Soc.}, {\bf 108} (2023), 236--243.


\bibitem{monva81}
H. L. Montgomery and R. C. Vaughan,
The distribution of squarefree numbers,
in: Recent Progress in Analytic Number Theory Vol. 1, H. Halberstam \& C. Hooley Eds, \textit{Academic Press}, 1981.

\bibitem{Morg}    J. F. Morgenbesser,
The sum of digits of $\fl{n^c}$,
\textit{Acta Arith.} {\bf 148} (2011),  367--393.

\bibitem{ram16} O. Ramar\'{e},
An explicit density estimate for Dirichlet $L$-series,
\textit{Math. Comput.} \textbf{85} (2016), 325--356.

\bibitem{RivW} J.~Rivat and J.~Wu,
`Prime number of the form $\fl{n^c}$', {\it Glasg. Math. J. \/}, \textbf{43} (2001), no. 2, 414--433.


\bibitem{sarrob04}
O. Robert and P. Sargos,
Three-dimensional exponential sums with monomials,
\textit{J. Reine Angew. Math.} \textbf{591} (2006), 1--20.



\bibitem{sri62}
B. R. Srinivasan,
On Van der Corput's and Nieland's results on the Dirichlet's divisor problem and the circle problem,
\textit{Proc. Natl. Inst. Sci.} Part \textbf{A 28} (1962), 732–-742.

 \bibitem{Stu}
 J.~Stucky,
 The fractional sum of small arithmetic functions,
\textit{J. Number Theory} {\bf 238} (2022), 731--739.


 \bibitem{Wu}
 J.~Wu,
 On a sum involving the Euler totient function,
 \textit{Indag. Math. (N.S.)} {\bf 30} (2019), 536--541.

 \bibitem{Wu2}
 J.~Wu,
 Note on a paper by Bordell\`es, Dai, Heyman, Pan and Shparlinski,
 \textit{Period. Math. Hungar.} {\bf 80} (2020), 95--102.


 \bibitem{Zha}
 W.~Zhai,
 On a sum involving the Euler function,
 \textit{J. Number Theory} {\bf 211} (2020), 199--219.

 \bibitem{Zha2}
 W.~Zhai,
 Corrigendum to "On a sum involving the Euler function",
 \textit{J. Number Theory} {\bf 222} (2021), 423--425.

 \bibitem{Zhao}
 F.~Zhao and J.~Wu,
 On a Sum Involving the Sum-of-Divisors Function,
 \textit{J. Math.} {\bf 2021}.


%



%
%
%
%
%
%
%
%
%
%
%
%
%
%
%
%
%
%
%

%
%
%
%
\end{thebibliography}
\end{document}